\documentclass[11pt,a4paper]{article}
\usepackage[T1]{fontenc}
\usepackage[utf8]{inputenc}
\usepackage{lmodern,amsmath,amssymb,amsthm,mathtools,microtype}
\usepackage[margin=27mm,headheight=14pt]{geometry}
\usepackage{booktabs,enumitem,tikz,fancyhdr,needspace,aliascnt,xurl}
\usepackage[numbers,sort&compress]{natbib}
\usepackage[hidelinks]{hyperref}
\usepackage[capitalise,nameinlink,noabbrev]{cleveref}
\usetikzlibrary{arrows.meta}
\hypersetup{pdftitle={A forbidden-induced-subgraph characterization of beautiful graphs},
 pdfauthor={Henning Wunderlich},
 pdfsubject={Revised draft: forbidden families for beautiful Berge graphs; double-bound and biclique representations}}
\setlist[enumerate]{itemsep=3pt,topsep=5pt}
\newtheorem{theorem}{Theorem}[section]
\newaliascnt{lemma}{theorem}
\newtheorem{lemma}[lemma]{Lemma}
\aliascntresetthe{lemma}
\newaliascnt{proposition}{theorem}
\newtheorem{proposition}[proposition]{Proposition}
\aliascntresetthe{proposition}
\newaliascnt{corollary}{theorem}
\newtheorem{corollary}[corollary]{Corollary}
\aliascntresetthe{corollary}
\theoremstyle{definition}
\newaliascnt{definition}{theorem}

\aliascntresetthe{definition}
\newaliascnt{example}{theorem}
\newtheorem{example}[example]{Example}
\aliascntresetthe{example}
\theoremstyle{remark}
\newaliascnt{remark}{theorem}

\aliascntresetthe{remark}
\newcommand{\F}{\mathcal F}
\newcommand{\cs}{\mathcal G}
\newcommand{\csg}{\mathsf{CS}}
\newcommand{\beautiful}{\mathsf{Beautiful}}
\newcommand{\Comp}{\operatorname{Comp}}
\newcommand{\DB}{\operatorname{DB}}
\newcommand{\HDB}{\mathsf{HDB}}
\newcommand{\Forb}{\operatorname{Forb}_{\mathrm{ind}}}
\newcommand{\gem}{\mathrm{gem}}
\newcommand{\net}{\mathrm{net}}
\newcommand{\watch}{\mathrm{watch}}

\newcommand{\down}{\mathord\downarrow}
\newcommand{\up}{\mathord\uparrow}
\newcommand{\dotcup}{\mathbin{\dot\cup}}
\newcommand{\PaperAuthor}{Henning Wunderlich}
\title{\bfseries A forbidden-induced-subgraph\\characterization of beautiful graphs}
\author{\PaperAuthor\thanks{Offenbach am Main, Germany. Email: \texttt{henningwunderlich@t-online.de}}}
\date{5 October 2026}

\begin{document}
\maketitle
\begin{abstract}
A graph is beautiful if each of its induced subgraphs is the intersection
graph of all maximal nonempty 1-rectangles of a binary matrix, with
adjacency defined by a common cell. Beautiful graphs were introduced as
a hereditary class of Berge graphs, but a complete forbidden-induced-subgraph
characterization was not obtained. We prove that a finite graph is beautiful
if and only if it has no induced $C_4$, gem, net, watch, or odd hole.
More precisely, these graphs are exactly the $C_4$-free comparability graphs
admitting a partial order in which every interval is a chain. The proof
constructs such an order from inclusion-maximal closed neighbourhoods:
their representatives induce a bipartite graph whose domination regions
admit compatible orientations. One order matrix then represents the graph
and, through its principal submatrices, every induced subgraph. The
representation step is formulated using classical double-bound graphs and
the established correspondence between maximal bicliques and
interval-intersection-closed posets. Consequences include polynomial-time
recognition, the exact minimal obstruction families, and a corrected
characterization in the $K_4$-free case.
\end{abstract}

\medskip
\noindent\textbf{Mathematics Subject Classification (2020).}
Primary 05C75; Secondary 06A06.

\smallskip
\noindent\textbf{Keywords.}
Cover-structure graph; beautiful graph; forbidden induced subgraph; Berge graph; comparability graph; double-bound graph; maximal rectangle.

\section{Introduction and main result}\label{sec:intro}
Let $M\colon X\times Y\to\{0,1\}$ be a finite binary matrix. A
\emph{nonempty 1-rectangle} is a product $A\times B$, with
$\varnothing\ne A\subseteq X$ and $\varnothing\ne B\subseteq Y$, on which all
entries of $M$ are one. Its maximality is with respect to inclusion; equivalently,
neither a row nor a column can be added while preserving this property.
The \emph{cover-structure graph}, or \emph{cs-graph}, $\cs(M)$ has as vertices
\emph{all} maximal nonempty 1-rectangles of $M$. Two distinct vertices
$A\times B$ and $A'\times B'$ are adjacent precisely when
\begin{equation}\label{eq:cell-intersection}
 A\cap A'\ne\varnothing\quad\text{and}\quad B\cap B'\ne\varnothing.
\end{equation}
Thus an intersection means a common matrix cell, not merely a common row or
column. We write $\csg$ for the class of graphs isomorphic to some $\cs(M)$.

A graph is \emph{beautiful} if every induced subgraph, including the graph
itself, belongs to $\csg$. The class $\beautiful$ is therefore the largest
hereditary subclass of $\csg$. These definitions use the one-colour convention
of \citet{Wunderlich2009}. Requiring all maximal rectangles is essential:
deleting vertices of $\cs(M)$ need not leave the complete maximal-rectangle
family of any matrix. Consequently $\csg$ itself need not be hereditary.

\citet{Wunderlich2009} introduced beautiful graphs as a class of Berge
graphs, compared them with familiar Berge subclasses, and established
necessary forbidden induced subgraphs and sufficient conditions on
restricted classes. Here a Berge graph has no odd hole or odd antihole.
The characterization question concerns the exact identity of this
hereditary class, not merely its containment in the Berge graphs. This is
the graph-class viewpoint developed in surveys and inclusion catalogues
such as \cite{BrandstadtLeSpinrad1999,ISGCI}.

Our main contribution is a complete forbidden-induced-subgraph
classification of the beautiful class introduced in
\cite{Wunderlich2008,Wunderlich2009}. We prove that the previously identified
obstructions are sufficient, with no additional families and no bound on
the clique number. Set
\begin{equation}\label{eq:family}
 \F=\{C_4,\gem,\net,\watch\}\ \cup\ \{C_{2k+1}:k\ge2\}.
\end{equation}
\Needspace{6\baselineskip}
The gem is a $P_4$ together with a universal vertex. The net is a triangle
with a pendant leaf at each of its three vertices. The watch is a diamond
$K_4-e$ with a pendant leaf at each of its two degree-three vertices; see
\cref{fig:obstructions}. The net is the graph called ``star'' in
\cite{Wunderlich2008,Wunderlich2009}; it is not the usual star $K_{1,3}$. A hole is an induced cycle of length at least four;
an odd hole therefore has length at least five.

\begin{figure}[tb]
\centering
\begin{minipage}[b]{.22\textwidth}\centering
\begin{tikzpicture}[scale=.87,line width=.55pt]
\path[use as bounding box] (-1.05,-.85) rectangle (1.05,1.05);
\coordinate (a) at (-.6,-.5);\coordinate (b) at (.6,-.5);
\coordinate (c) at (.6,.7);\coordinate (d) at (-.6,.7);
\draw (a)--(b)--(c)--(d)--cycle;
\foreach \v in {a,b,c,d}\fill (\v) circle (1.6pt);
\end{tikzpicture}\par $C_4$
\end{minipage}\hfill
\begin{minipage}[b]{.22\textwidth}\centering
\begin{tikzpicture}[scale=.87,line width=.55pt]
\path[use as bounding box] (-1.05,-.85) rectangle (1.05,1.05);
\coordinate (a) at (-.9,-.5);\coordinate (b) at (-.3,-.5);
\coordinate (c) at (.3,-.5);\coordinate (d) at (.9,-.5);
\coordinate (u) at (0,.75);
\draw (a)--(b)--(c)--(d);
\foreach \v in {a,b,c,d}\draw (u)--(\v);
\foreach \v in {a,b,c,d,u}\fill (\v) circle (1.6pt);
\end{tikzpicture}\par gem
\end{minipage}\hfill
\begin{minipage}[b]{.22\textwidth}\centering
\begin{tikzpicture}[scale=.87,line width=.55pt]
\path[use as bounding box] (-1.05,-.85) rectangle (1.05,1.05);
\coordinate (a) at (0,.35);\coordinate (b) at (-.4,-.25);
\coordinate (c) at (.4,-.25);\coordinate (x) at (0,.95);
\coordinate (y) at (-.85,-.6);\coordinate (z) at (.85,-.6);
\draw (a)--(b)--(c)--cycle (a)--(x) (b)--(y) (c)--(z);
\foreach \v in {a,b,c,x,y,z}\fill (\v) circle (1.6pt);
\end{tikzpicture}\par net
\end{minipage}\hfill
\begin{minipage}[b]{.22\textwidth}\centering
\begin{tikzpicture}[scale=.87,line width=.55pt]
\path[use as bounding box] (-1.05,-.85) rectangle (1.05,1.05);
\coordinate (u) at (-.45,.05);\coordinate (v) at (.45,.05);
\coordinate (a) at (0,.65);\coordinate (b) at (0,-.55);
\coordinate (x) at (-1,.05);\coordinate (y) at (1,.05);
\draw (u)--(a)--(v)--(b)--cycle (u)--(v) (u)--(x) (v)--(y);
\foreach \v in {u,v,a,b,x,y}\fill (\v) circle (1.6pt);
\end{tikzpicture}\par watch
\end{minipage}
\caption{The four individual obstructions. The remaining forbidden family consists
of all odd holes $C_5,C_7,C_9,\ldots$. Every exclusion is induced.}
\label{fig:obstructions}
\end{figure}

For a poset $P$, its \emph{comparability graph} $\Comp(P)$ joins two distinct
elements whenever they are comparable. We say that $P$ has \emph{chain
intervals} if every interval
\[
 [a,b]_P=\{x:a\le_Px\le_Pb\},\qquad a\le_Pb,
\]
is a chain. Equivalently, incomparable elements never have both a common
lower bound and a common upper bound.

\begin{theorem}[Beautiful-graph characterization]\label{thm:main}
For every finite graph $G$, the following statements are equivalent.
\begin{enumerate}[label=\textnormal{(\roman*)}]
\item $G$ is beautiful.
\item $G$ has no induced member of $\F$.
\item $G$ is $C_4$-free and $G\cong\Comp(P)$ for a poset $P$ with chain intervals.
\item $G$ is Berge and has no induced $C_4$, gem, net, or watch.
\end{enumerate}
In \textnormal{(iii)}, $P$ can be constructed on $V(G)$. Its order matrix
\begin{equation}\label{eq:order-matrix}
 M_P(x,y)=\begin{cases}1,&x\le_Py,\\0,&\text{otherwise}\end{cases}
\end{equation}
has exactly $|V(G)|$ maximal nonempty 1-rectangles, naturally indexed by
$V(G)$, and simultaneously satisfies
\begin{equation}\label{eq:hereditary-matrix}
 \cs\bigl(M_P[W,W]\bigr)\cong G[W]
 \qquad\text{for every }W\subseteq V(G).
\end{equation}
\end{theorem}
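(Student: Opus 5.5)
We prove the cycle (i)$\Rightarrow$(ii)$\Rightarrow$(iii)$\Rightarrow$(i) and treat (ii)$\Leftrightarrow$(iv) separately. For (ii)$\Leftrightarrow$(iv) we show that a $C_4$-free graph contains no odd antihole. Indeed, $\overline{C_5}=C_5$, and for $k\ge3$ the antihole $\overline{C_{2k+1}}$ contains an induced $C_4$: if the cycle order is $v_0,\dots,v_{2k}$, the vertices $v_0,v_1,v_3,v_4$ induce a $C_4$ in the complement. Hence, under $C_4$-freeness, ``no odd hole'' and ``Berge'' coincide.

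For (i)$\Rightarrow$(ii), the class is hereditary, so it suffices to show that no member of $\F$ lies in $\csg$. For $C_4$, gem, net and watch we argue directly on a hypothetical matrix. Each vertex is a maximal rectangle $A\times B$, and adjacency is a common cell. Taking the rows and columns witnessing the required intersections, we can exhibit a further maximal rectangle containing them, which is either an extra vertex or forces a forbidden adjacency. For odd holes we use that a cs-graph is the comparability graph of a suitable relation on rectangles, as in \citet{Wunderlich2009}. Order maximal rectangles by $A\subseteq A'$; two rectangles sharing a cell are then comparable. A comparability graph is perfect, so it contains no odd hole. Where \cite{Wunderlich2009} already establishes these obstructions, we simply cite them.

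For (iii)$\Rightarrow$(i), take $P$ with chain intervals and $G=\Comp(P)$ $C_4$-free. For $v\in V(G)$ set $R_v=\down v\times\up v$ in $M_P$; this is a 1-rectangle by transitivity. The first task is to show that the maximal 1-rectangles of $M_P$ are exactly these sets, using chain intervals together with $C_4$-freeness. Let $A\times B$ be a 1-rectangle, so every $a\in A$ lies below every $b\in B$. If $A$ contained two incomparable elements, any $b\in B$ would be a common upper bound, and we must show that maximality then produces a common lower bound or an induced $C_4$. The key point is that the set of elements between $A$ and $B$ is nonempty and is a chain; this is the standard double-bound/biclique correspondence for interval-intersection-closed posets. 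Next, $R_u$ and $R_v$ share a cell iff some $x\le u,v$ and some $y\ge u,v$. By chain intervals this holds iff $u,v$ are comparable, that is, iff $uv\in E(G)$. Since chain intervals and $C_4$-freeness pass to induced subposets, and $M_P[W,W]=M_{P|_W}$, this gives \eqref{eq:hereditary-matrix} and hence beauty.

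The main obstacle is (ii)$\Rightarrow$(iii), the construction of $P$. We plan to follow the abstract. Choose representatives of the inclusion-maximal closed neighbourhoods $N[v]$. We show these representatives induce a bipartite graph: an odd cycle of them, combined with the forbidden configurations, would yield an odd hole, a gem or a net. A bipartition colours each representative as either ``top'' or ``bottom''. Every other vertex $w$ lies in the domination region of those representatives $r$ with $N[w]\subseteq N[r]$. Within each region we orient edges by refining the colouring, using a transitive orientation of the region, which is cograph-like because it is gem-free and $C_4$-free. We then show that orientations of overlapping regions agree, with the watch and net excluding the conflicting cases. It remains to verify transitivity and chain intervals globally. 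The main difficulty is this compatibility of regional orientations, which we expect to require a careful case analysis on two representatives with intersecting regions. If it fails, we would fall back on a Gallai-type implication-class argument: $G$ is a comparability graph since it is perfect and excludes the relevant obstructions, and we would pick the orientation minimising double-bounded incomparable pairs.
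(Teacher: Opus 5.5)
\paragraph{Main gap: (ii)$\Rightarrow$(iii) is not proved.}
Your argument for (ii)$\Leftrightarrow$(iv) is correct and matches the paper, but (ii)$\Rightarrow$(iii), the core of the theorem, is not proved. Your opening moves agree with the paper: representatives $R$ of the maximal closed neighbourhoods, a bipartite $H=G[R]$, and domination. However, the step you call the main difficulty is left as an expected case analysis. Your fallback does not close it.
\begin{itemize}
\item Perfection does not give a transitive orientation: the net is chordal and has none.
\item Deriving comparability from the exclusions is itself part of what must be shown.
\item Nothing forces an orientation that minimises double-bounded incomparable pairs to have none.
\end{itemize}

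What is missing is the structure that makes compatibility automatic.
\begin{enumerate}
\item[(a)] In a $(C_4,\gem,\watch)$-free graph, the two centres of an induced diamond have nested closed neighbourhoods. A vertex adjacent to only one centre sees neither tip, and two such vertices on opposite sides give a $C_4$ or a watch. Together with net-freeness this makes $H$ triangle-free. It also makes $K_{rs}=N[r]\cap N[s]$ a clique for each edge $rs$ of $H$.
\item[(b)] The set $D(v)=\{r\in R:N[v]\subseteq N[r]\}$ is a clique of $H$, so $|D(v)|\le 2$. Hence $V(G)$ is \emph{partitioned}, not merely covered by overlapping domination regions, into regions $V_r$ and $W_{rs}$. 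Every edge between distinct regions lies inside some $K_{rs}$, and $W_{rs}$ consists of simplicial true twins with closed neighbourhood $K_{rs}$.
\item[(c)] Adjacent vertices with a common dominator have nested closed neighbourhoods (by $C_4$- and gem-freeness). So inside $V_r$, adjacency is nesting, and reverse inclusion gives an order $T_r$ whose principal ideals are chains. In $T_r$, the set $A_{rs}=V_r\cap N[s]$ is a predecessor-closed chain.
\end{enumerate}
An arbitrary transitive orientation of each region would not do; it must be this one. The bipartition $R=L\dotcup U$ then fixes directions:
\begin{itemize}
\item regions with representative in $L$ are oriented from larger to smaller closed neighbourhoods;
\item regions with representative in $U$ are oriented from smaller to larger;
\item each interface clique is oriented as $A_{rs}\to W_{rs}\to B_{rs}$ for $r\in L$, $s\in U$.
\end{itemize}
Every cross-region edge then points from the $L$ side towards the $U$ side, so directed paths have the shape $V_r\to W_{rs}\to V_s$, with parts possibly absent. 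Transitivity follows from the nesting and the linearity of each $K_{rs}$. An interval either stays inside one region $V_t$, where it is an interval of $T_t$ or its reverse and hence a chain, or it is trapped in a clique $K_{rs}$.

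\paragraph{Two further points need repair.}
In (i)$\Rightarrow$(ii), your claim that maximal rectangles sharing a cell are comparable under row inclusion is false. For the matrix with rows $110,111,011$, the rectangles $\{1,2\}\times\{1,2\}$ and $\{2,3\}\times\{2,3\}$ share the cell $(2,2)$ but have incomparable row sets. The claim does hold in $K_4$-free cs-graphs: the meet and join of two incomparable cell-sharing rectangles are two further distinct maximal rectangles through that cell, giving a $K_4$. That suffices for odd holes, and citing the known exclusions is also acceptable.

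In (iii)$\Rightarrow$(i), you try to rule out incomparable elements in the row set $A$, but $\down p$ may contain them (take incomparable $x,y<p$). What is needed is that $C_4$-freeness of $\Comp(P)$ makes $P$ interval-intersection-closed. Indeed, two distinct maximal common lower bounds, or minimal common upper bounds, of an incomparable pair would induce a $C_4$. Then $p=\bigvee A$ exists, the common upper bounds of $A$ form $\up p$, and maximality gives $A\times B=\down p\times\up p$. Chain intervals are needed only for the equivalence of cell-sharing with comparability. To get exactly $|V(G)|$ rectangles, you should also note that each $R_v$ is maximal and that the $R_v$ are pairwise distinct.
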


In particular,
\[
 \beautiful=\Forb\bigl(C_4,\gem,\net,\watch,C_5,C_7,C_9,\ldots\bigr).
\]
Equivalently, within the class of Berge graphs the classification has four
individual obstructions:
\begin{equation}\label{eq:berge-classification}
 \beautiful=\mathsf{Berge}\cap\Forb(C_4,\gem,\net,\watch).
\end{equation}
There is no restriction on the clique number. The stronger representation
\eqref{eq:hereditary-matrix} makes the hereditary conclusion explicit, without
constructing a separate unrelated matrix for each induced subgraph.

\paragraph{Proof outline.}
We first establish an order-matrix normal form for arbitrary cs-graphs.
It identifies a sufficient hereditary certificate: a $C_4$-free comparability
graph represented by a poset with chain intervals. The main construction
then uses representatives of the inclusion-maximal closed neighbourhoods of
an $\F$-free graph. These representatives induce a bipartite graph. Each
vertex is dominated by one or two representatives, giving a partition into
regions with controlled interfaces. The bipartition determines compatible
local orientations, yielding the required poset.

\subsection{Relation to earlier work}\label{sec:related}
The classification in \cref{thm:main} is distinct from the older
representation theory used in its proof. In particular, neither the
poset-graph operator nor the order-matrix construction below is introduced
as a new framework. The new sufficiency argument is the
maximal-neighbourhood decomposition and its compatible global orientation
in \cref{sec:structure,sec:orientation}.

\paragraph{Double-bound graphs.}
The graph on a poset $P$ that joins two elements with both a common lower
bound and a common upper bound is its \emph{double-bound graph}, denoted
$\DB(P)$. This is the operator of McMorris and Zaslavsky
\cite{McMorrisZaslavsky1982}. Their general double-bound characterization
was corrected by Diny \cite{Diny1985}; we do not invoke that
characterization. Subsequent work considers transformations preserving
bound graphs \cite{EraOgawaTsuchiya2001}, clique-cover representations
\cite{IwaiOgawaTsuchiya2003Clique}, and constructions reducing double-bound
graphs to bipartite graphs and rebuilding them by additions
\cite{IwaiOgawaTsuchiya2003Construction}. Other studies relate chordal
bound graphs and restricted double-bound classes to forbidden subposets
\cite{IwaiOgawaTsuchiya2008,OgawaTagusariTsuchiya2009}.

Hereditary double-bound graphs were studied before the introduction of
cover-structure graphs
\cite{EraIwaiOgawaTsuchiya2003,EraOgawaTsuchiya2008}.
They provide a close predecessor to the hereditary representation question,
but are not the same class as beautiful graphs. In \cref{prop:hdb} we
prove directly that beautiful graphs form a proper subclass of the graphs
all of whose induced subgraphs are double-bound graphs; $C_4$ separates
the two classes. We do not use a characterization of hereditary
double-bound graphs to derive \cref{thm:main}.

\paragraph{Bicliques and formal concepts.}
Let $B_M$ be the bipartite incidence graph of $M$, with parts $X,Y$ and
an edge $xy$ exactly when $M(x,y)=1$. Its maximal bicliques with two
nonempty parts are precisely the maximal 1-rectangles. Sharing a matrix
cell is exactly sharing an edge of $B_M$. In the notation of the
edge-biclique operator studied by Montero and Legay
\cite{MonteroLegay2022},
\begin{equation}\label{eq:edge-biclique}
 \cs(M)=KB_e(B_M),\qquad
 \csg=KB_e(\mathsf{Bipartite}).
\end{equation}
This must not be confused with the ordinary biclique operator $KB$, for
which one common vertex suffices. The row and column sets of a maximal
rectangle are, respectively, the extent and intent of a formal concept;
the restriction here is that both are nonempty. Their inclusion order is
therefore part of the standard framework of formal concept analysis
\cite{GanterWille1999}.

Groshaus and Guedes \cite{GroshausGuedes2021} established the
interval-intersection-closed (IIC) order on maximal bicliques of a
bipartite graph and the converse predecessors--successors construction.
In particular, their IIC construction has exactly the principal maximal
bicliques. Taking \emph{edge} intersections gives the normal form
\[
 \csg=\{\DB(P):P\text{ is a finite IIC poset}\}.
\]
\Cref{prop:normal} formulates this consequence in cs-graph language and
includes an elementary proof. This normal form supplies the representation step in the proof of the
hereditary classification.
Recent work on biclique-part inclusion graphs continues the connection
with comparability and IIC-type orders \cite{CruzGroshausGuedes2025};
\citet{Groshaus2025Survey} surveys biclique recognition and inverse problems.

\paragraph{Hereditary representations and root reconstruction.}
Montero's vertex-removal problem for ordinary biclique graphs
\cite{Montero2022Removal} addresses a closely related difficulty: after
deleting an output vertex, can a new root be constructed that has exactly
the remaining maximal objects? Groshaus, Hell and Stacho
\cite{GroshausHellStacho2012} study hereditary and Helly properties of
biclique edge-set structures, including a forbidden-subgraph
characterization for hereditary biclique line graphs. Their biclique line
graph has root \emph{edges}, rather than maximal bicliques, as vertices;
its characterization is not an edge-biclique characterization.
Apollonio, Caramia and Franciosa \cite{ApollonioCaramiaFranciosa2015}
relate bipartite distance-hereditary roots to tree-shaped Galois orders.
That restriction is on the root graph and its order, whereas beautifulness
is hereditary membership of the output graph in \eqref{eq:edge-biclique}.

\paragraph{Orders with chain intervals.}
The chain-interval condition is also established terminology in other
settings: it excludes a diamond as an induced (or full) \emph{subposet}.
For a finite poset it is equivalent to the Hasse diagram having at most
one directed path between each ordered pair. These Hasse diagrams are
multitrees in the sense of Furnas and Zacks \cite{FurnasZacks1994}.
The same condition appears in the theory of incidence algebras
\cite[Lemmas~6.7 and~6.10]{FloresKimuraRognerud2022}; algebraic
hereditariness there has a different meaning. It does not require the
comparability graph to be diamond-free, and does not require the
underlying undirected Hasse diagram to be a forest. The contribution here
is to construct such an order from the precise forbidden graph families
in \eqref{eq:family} and to make its order-matrix representation hereditary.

\paragraph{Conventions.}
All graphs and posets are finite, and graphs are simple. Throughout,
``$F$-free'' means free of \emph{induced} copies of $F$. We write
$N[v]=N(v)\cup\{v\}$ for a closed neighbourhood. Distinct vertices with equal
closed neighbourhoods are \emph{true twins}. A vertex is \emph{simplicial}
if its neighbourhood is a clique. The empty graph is represented by a matrix
with no 1-entry; for principal-submatrix statements we allow the empty
$0\times0$ matrix. No connectedness assumption is made.

\section{Double-bound graphs and the cs normal form}\label{sec:normal}
For a poset $P$, put $\down p=\{x:x\le_Pp\}$ and
$\up p=\{y:p\le_Py\}$. Following \citet{GroshausGuedes2021}, call $P$
\emph{interval-intersection-closed} (IIC) if every nonempty intersection of
two principal ideals is a principal ideal, and dually for principal filters.
Equivalently, a pair with a common lower bound has an infimum, and a pair
with a common upper bound has a supremum. Empty intersections are allowed;
there need not be a least or a greatest element.

Recall the classical double-bound graph $\DB(P)$
\cite{McMorrisZaslavsky1982}, defined on $P$ by
\begin{equation}\label{eq:boundgraph}
 xy\in E(\DB(P))
 \quad\Longleftrightarrow\quad
 (\down x)\cap(\down y)\ne\varnothing
 \ \text{ and }\
 (\up x)\cap(\up y)\ne\varnothing,
 \qquad x\ne y.
\end{equation}
In general $\DB(P)$ is not the comparability graph: incomparable elements can
have both kinds of common bound.

The next statement is the edge-intersection consequence of the IIC
biclique representations of \citet{GroshausGuedes2021}. The two structural
ingredients---IIC of the biclique order and completeness of the principal
bicliques in the converse construction---are Lemmas~5, 7, and~8 of the
2020 preprint cited with that paper. We include the proof to keep the
matrix conventions and the requirement of \emph{all} maximal rectangles
explicit.

\begin{proposition}[Cs normal form]\label{prop:normal}
A finite graph $G$ is a cs-graph if and only if $G\cong \DB(P)$ for an IIC
poset on $V(G)$. For every IIC poset $P$, its order matrix satisfies
$\cs(M_P)\cong\DB(P)$ and has exactly the maximal nonempty 1-rectangles
\begin{equation}\label{eq:principal-rectangles}
 R_p=(\down p)\times(\up p),\qquad p\in P.
\end{equation}
\end{proposition}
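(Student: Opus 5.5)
We prove the second assertion first; the first then follows from it together with a construction in the converse direction.

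\emph{Step 1: maximal rectangles of $M_P$.} Fix an IIC poset $P$. A set $A\times B$ with $A,B$ nonempty is a 1-rectangle of $M_P$ exactly when every element of $A$ lies below every element of $B$. Each $R_p$ is a 1-rectangle by transitivity. It is maximal: a row $x$ that can be added must satisfy $x\le p$, because $p\in\up p$, so $x\in\down p$ already; the column case is dual. Conversely, let $A\times B$ be a maximal 1-rectangle and pick $b\in B$. Then $A\subseteq\bigcap_{b\in B}\down b$, and this intersection is nonempty. Applying the IIC property repeatedly to finitely many principal ideals, the intersection is a principal ideal $\down p$. By maximality, $A=\down p$. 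Every $b\in B$ lies above $p$, so $B\subseteq\up p$, and maximality gives $B=\up p$. Hence $A\times B=R_p$. The rectangles $R_p$ are pairwise distinct, since $p$ is the greatest element of $\down p$. So the maximal rectangles are exactly the $|P|$ rectangles $R_p$, indexed by $P$.

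\emph{Step 2: adjacency.} By \eqref{eq:cell-intersection}, $R_p$ and $R_q$ are adjacent iff $\down p\cap\down q\ne\varnothing$ and $\up p\cap\up q\ne\varnothing$. This is literally \eqref{eq:boundgraph}. Therefore $p\mapsto R_p$ is an isomorphism $\DB(P)\cong\cs(M_P)$. This step does not even use IIC; IIC is needed only for Step 1.

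\emph{Step 3: every cs-graph is of this form.} Given $M\colon X\times Y\to\{0,1\}$, let $P$ be the set of maximal nonempty 1-rectangles. Order them by $A\times B\le A'\times B'$ iff $A\subseteq A'$, which for concepts is equivalent to $B\supseteq B'$. This is the concept order with the bottom and top concepts removed whenever they have an empty side.

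To check IIC, suppose $A\times B$ and $A'\times B'$ have a common lower bound $A''\times B''$. Then $A\cap A'\supseteq A''\ne\varnothing$. The set $(A\cap A')\times(A\cap A')'$ is a concept whose intent contains $B\cup B'$, so it is nonempty and the concept is an element of $P$. It is the meet, and it lies below both. Hence the intersection of the two principal ideals is exactly the principal ideal of this meet. Filters are handled dually.

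It remains to show $\DB(P)\cong\cs(M)$. I expect this to be the main obstacle, namely proving that the bound conditions coincide with cell intersection. Suppose $R=A\times B$ and $R'=A'\times B'$ share a cell $(x,y)$. Then $A\cap A'\ne\varnothing$ and $B\cap B'\ne\varnothing$. The meet concept $(A\cap A')\times(A\cap A')''$ has nonempty extent, and its intent contains $y$, so it is a common lower bound in $P$. Dually, the join concept $(B\cap B')'\times(B\cap B')$ is a common upper bound. Conversely, a common lower bound $L$ has $\varnothing\ne\operatorname{ext}L\subseteq A\cap A'$, and a common upper bound $U$ has $\varnothing\ne\operatorname{int}U\subseteq B\cap B'$. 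Thus the two rectangles share a cell. The care needed here is to use only concepts with both sides nonempty, which is exactly what the definition of $P$ enforces.

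Finally, relabel $P$ along the vertex bijection $V(G)\to P$ to obtain a poset on $V(G)$. Together with Steps 1–2, this proves the equivalence.
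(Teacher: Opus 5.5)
Your proof is correct and follows the paper's argument: you order rectangles by row set, with meets from $A\cap A'$ and joins from $B\cap B'$, for the forward direction, and you identify every maximal rectangle of $M_P$ as some $R_p$ for the converse, where your intersection of the ideals $\down b$, $b\in B$, in place of the paper's $\bigvee A$ is just the dual use of IIC. One slip: in your adjacency paragraph the intent of the meet concept should read $(A\cap A')'$, not $(A\cap A')''$; also, using primes both for the second rectangle and for the derivation operator invites confusion.
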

\begin{proof}
Let $G=\cs(M)$. Order its maximal rectangles by their row sets:
\[
 A\times B\ \le\ A'\times B'\quad\Longleftrightarrow\quad A\subseteq A'.
\]
Maximality says that $A$ is the set of all rows equal to one on $B$, and
$B$ is the set of all columns equal to one on $A$. Consequently the order
is also characterized by $B'\subseteq B$, and equal row sets determine equal
rectangles.

For two rectangles $A_1\times B_1$ and $A_2\times B_2$, suppose that
$A=A_1\cap A_2$ is nonempty. Let $B$ be the set of columns equal to one on
$A$. It contains $B_1\cup B_2$ and is nonempty. A row equal to one on $B$
must belong to both $A_1$ and $A_2$, while every row of $A$ is equal to one
on $B$. Thus $A\times B$ is maximal and is the infimum of the two rectangles.
If their row sets are disjoint, they have no common lower bound. The dual
argument, using $B_1\cap B_2$, treats upper bounds and suprema. This proves
that the rectangle poset $P$ is IIC. It also shows that the two nonemptiness
conditions in \eqref{eq:boundgraph} are exactly
\eqref{eq:cell-intersection}, so $G=\DB(P)$.

Conversely, let $P$ be IIC. Each $R_p$ in \eqref{eq:principal-rectangles}
is a 1-rectangle of $M_P$. Its column $p$ excludes every row outside
$\down p$, and its row $p$ excludes every column outside $\up p$, so it is
maximal. These rectangles are distinct, since equality of principal ideals
implies equality of their greatest elements.

Now let $A\times B$ be any maximal nonempty 1-rectangle of $M_P$. Every
element of $B$ is an upper bound for $A$. Finite repeated use of the bounded
binary-supremum property gives $p=\bigvee A$. The set of all upper bounds
of $A$ is $\up p$, so maximality forces $B=\up p$ and then $A=\down p$.
Thus there are no other maximal nonempty 1-rectangles. Finally,
$R_x\cap R_y\ne\varnothing$ is precisely \eqref{eq:boundgraph}.
\end{proof}

\begin{corollary}\label{cor:cs-certificate}
Every cs-graph on $n$ vertices has an $n\times n$ binary representation.
Recognition of cs-graphs belongs to $\mathrm{NP}$.
\end{corollary}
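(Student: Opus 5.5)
The plan is to read both claims off \cref{prop:normal}: an $n\times n$ representation for the first, and the NP certificate as the second.

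For the first claim, let $G$ be a cs-graph on $n$ vertices. By \cref{prop:normal} there is an IIC poset $P$ on $V(G)$ with $G\cong\DB(P)$. The same proposition gives $\cs(M_P)\cong\DB(P)\cong G$. The order matrix $M_P$ of \eqref{eq:order-matrix} is indexed by $V(G)\times V(G)$, so it is an $n\times n$ binary matrix representing $G$. If $G$ is empty we use the $0\times 0$ convention.

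For membership in $\mathrm{NP}$, the certificate is a binary relation $\le$ on $V(G)$, written down as an $n\times n$ matrix. The verifier checks four things in polynomial time:
\begin{enumerate}
\item reflexivity, antisymmetry and transitivity, in $O(n^3)$;
\item the IIC property;
\item that the double-bound graph, built from \eqref{eq:boundgraph} by testing each pair for a common lower and a common upper bound, equals $G$ exactly, in $O(n^3)$;
\item nothing further, since the vertex identity is fixed by the certificate.
\end{enumerate}
Equality with $G$ suffices, because the certificate lives on $V(G)$ itself and \cref{prop:normal} already states the poset on $V(G)$. So no isomorphism search is needed.

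The IIC check is the only step needing care. For each pair $x,y$, compute $L=\down x\cap\down y$. If $L$ is nonempty, check that it has a greatest element $z$ with $\down z=L$. Do the dual check for filters. This costs $O(n)$ per candidate and $O(n^4)$ overall.

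Correctness follows from \cref{prop:normal} in both directions. A yes-instance has such a certificate by the forward direction. Conversely, any accepted certificate gives $G=\DB(P)\cong\cs(M_P)$, so $G$ is a cs-graph. There is no real obstacle. The one point to stress is that the certificate is the poset rather than an arbitrary matrix, since an arbitrary matrix could have exponentially many maximal rectangles and would make verification of the vertex set hard.
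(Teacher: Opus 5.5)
Your proposal is correct and matches the paper's proof: the representation is the order matrix $M_P$ from the cs normal form, and the NP certificate is the $n\times n$ order relation itself. It is verified by checking the partial-order axioms, the IIC property pair by pair, and the double-bound adjacency against $G$. Your extra details, namely testing exact equality rather than isomorphism (the poset can be transported to $V(G)$) and the $O(n^4)$ bound for the IIC check, are consistent refinements of the same argument.
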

\begin{proof}
The representation is $M_P$ from \cref{prop:normal}. A certificate is its
$n\times n$ order relation. Reflexivity, antisymmetry and transitivity can
be checked in polynomial time. For each pair, compute its common lower and
upper bounds and check that every nonempty such set is a principal ideal or
filter. Finally check \eqref{eq:boundgraph} against the input graph.
All these operations take polynomial time; no maximal-rectangle enumeration
is needed.
\end{proof}
The certificate bound does not, by itself, give a polynomial-time algorithm
for finding an IIC order of an arbitrary cs-graph. A polynomial-time
recognizer for the hereditary subclass will follow from the construction
in \cref{sec:orientation}.

\subsection{Comparison with hereditary double-bound graphs}\label{sec:hdb}
Let $\HDB$ be the class of graphs whose every induced subgraph is a
double-bound graph of some finite poset. The representing poset may vary
with the induced subgraph. This is the hereditary representation property
studied in \cite{EraIwaiOgawaTsuchiya2003,EraOgawaTsuchiya2008}; no IIC
condition is imposed in the definition of $\HDB$.

\begin{proposition}\label{prop:hdb}
One has $\beautiful\subsetneq\HDB$.
\end{proposition}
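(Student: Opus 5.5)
We prove the two parts separately: the inclusion $\beautiful\subseteq\HDB$, and its strictness via $C_4$.

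\emph{Inclusion.} Let $G$ be beautiful and $H$ an induced subgraph of $G$. By definition $H$ is a cs-graph. By \cref{prop:normal}, $H\cong\DB(P)$ for some IIC poset $P$ on $V(H)$, which is in particular a finite poset. Hence every induced subgraph of $G$ is a double-bound graph, so $G\in\HDB$. This direction needs nothing beyond \cref{prop:normal}; it does not use \cref{thm:main}, whose proof comes later.

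\emph{Strictness.} We show $C_4\in\HDB\setminus\beautiful$.

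First, $C_4\notin\beautiful$. A graph is beautiful only if it is itself a cs-graph, so it suffices to show that $C_4$ is not a cs-graph. Suppose $C_4\cong\DB(P)$ with $P$ IIC on four elements $a,b,c,d$ and edges $ab,bc,cd,da$.
\begin{itemize}
\item Two comparable elements always have a common lower and a common upper bound (the smaller and the larger of the two). So comparable pairs are adjacent, and the nonadjacent pairs $\{a,c\}$ and $\{b,d\}$ are incomparable.
\item In each incomparable adjacent pair, both bounds are present, so by IIC the pair has an infimum and a supremum, both elements of $P$.
\item If every edge were a comparability, $C_4$ would be the comparability graph of $P$ with $a,c$ and $b,d$ incomparable. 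Its orientation must then be bipartite-like, say $a,c<b,d$. But then $a,c$ have common upper bounds $b,d$ and no supremum, contradicting IIC.
\item So some edge, say $ab$, is an incomparable pair. Its infimum and supremum must be two distinct elements among $c,d$, each different from $a,b$. Each of them is comparable to both $a$ and $b$, so one of $c,d$ is adjacent to both $a$ and $b$. This contradicts $C_4$, where $c$ is not adjacent to $a$ and $d$ is not adjacent to $b$.
\end{itemize}
This case analysis is the only real content, and it is routine. Equivalently, $C_4\notin\csg$ because $C_4$ is a forbidden graph; however, \cref{thm:main} is proved later, so the direct argument above is preferable.

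Second, $C_4\in\HDB$. Every proper induced subgraph of $C_4$ is a disjoint union of paths of length at most $2$, together with isolated vertices and the empty graph. These graphs are beautiful via chain posets and antichains: a path $P_3$ is the comparability graph of the poset $x<y>z$, which is IIC, and its double-bound graph is $P_3$. They are therefore in $\HDB$ by the inclusion above. For $C_4$ itself, take the poset $P$ with minimal elements $a,c$, maximal elements $b,d$, and a middle element $m$ with $a,c<m<b,d$. Then $\DB(P)$ contains $m$, which is not wanted. Instead, use the standard fact that double-bound graphs admit extra vertices only if they appear in the graph. So the four-element crown $a,c<b,d$ is used directly. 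Its double-bound graph joins comparable pairs, giving the four edges of the cycle. The pairs $\{a,c\}$ and $\{b,d\}$ are not joined, since $a,c$ share no lower bound and $b,d$ share no upper bound. Hence $\DB(\text{crown})=C_4$, and this poset is not IIC, consistent with the first step. Therefore $C_4\in\HDB$.
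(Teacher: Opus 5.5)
Your proposal is correct. The inclusion $\beautiful\subseteq\HDB$ is exactly the paper's argument via \cref{prop:normal}.

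For strictness, the witness $C_4$ is the same, but the routes differ in two places.

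\emph{Showing $C_4\in\HDB$.} The paper proves the more general fact that every bipartite graph is the double-bound graph of its height-at-most-two order $l<u$ for $lu\in E(H)$. Your crown is exactly this order for $C_4$. Since bipartiteness is hereditary, this single construction handles all induced subgraphs at once and gives $\mathsf{Bipartite}\subseteq\HDB$. You instead enumerate the proper induced subgraphs of $C_4$ and route them through the inclusion already proved. That works, though it is more ad hoc.

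\emph{Showing $C_4\notin\beautiful$.} The paper cites the known exclusion and defers to \cref{prop:excluded}; there is no circularity, since that proposition does not use \cref{prop:hdb}. You give a self-contained four-element proof that $C_4\notin\csg$. In one case, an incomparable adjacent pair would need an infimum and a supremum among the other two vertices, each adjacent to both. In the other case, the all-comparability orientation leaves the lower pair without a supremum. This is a sound specialization of \cref{lem:k4-order} combined with the cycle-orientation argument. It buys independence from later results, while the paper's version buys generality.

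One presentational fix: drop the aside about a middle element $m$ and the ``standard fact that double-bound graphs admit extra vertices only if they appear in the graph.'' That is not a meaningful statement and plays no role, since $\DB(P)$ simply has vertex set $P$. Go straight to the crown.
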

\begin{proof}
If $G$ is beautiful, each $G[W]$ is a cs-graph and hence, by
\cref{prop:normal}, the double-bound graph of an IIC poset. Thus
$G\in\HDB$.

Every bipartite graph $H$ with bipartition $L\dotcup U$ is a
double-bound graph: take the height-at-most-two order whose strict
comparisons are $l<u$ exactly for $lu\in E(H)$. Distinct elements on the
same side cannot have both kinds of common bound, while a pair on
opposite sides has both precisely when it is comparable. The double-bound
graph is therefore $H$. Since induced subgraphs of a bipartite graph are
bipartite, every bipartite graph belongs to $\HDB$.

In particular $C_4\in\HDB$, but $C_4\notin\beautiful$ by the known
exclusion in \cite{Wunderlich2009}, also proved independently in
\cref{prop:excluded} below. The height-two representation of $C_4$ is not
IIC: its two minimal elements have two incomparable minimal common upper
bounds and no supremum. This proves strictness.
\end{proof}

The IIC requirement therefore cannot simply be dropped from the cs
normal form or from a comparison of the two hereditary classes. The
strict inclusion distinguishes the classes; the proof of
\cref{thm:main} below is independent of the earlier hereditary-double-bound
characterizations.

\subsection{A hereditary sufficient certificate}\label{sec:certificate}
\begin{lemma}\label{lem:c4-iic}
Every finite poset whose comparability graph is $C_4$-free is IIC.
\end{lemma}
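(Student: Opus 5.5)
We prove the lower-bound half of the IIC property; the upper-bound half then follows by applying the same argument to the dual poset $P^{\mathrm{op}}$, which has the same comparability graph. So let $P$ be a finite poset with $\Comp(P)$ $C_4$-free, and let $x,y\in P$ have a common lower bound. We must show that $L=(\down x)\cap(\down y)$ is a principal ideal. Since $L$ is finite and nonempty, it has at least one maximal element. It therefore suffices to show that $L$ has a \emph{unique} maximal element $m$. Indeed, every element of $L$ lies below some maximal element of $L$, so $L=\down m$.

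If $x$ and $y$ are comparable, say $x\le_P y$, then $L=\down x$ and there is nothing to prove. So assume $x\parallel y$, and suppose for contradiction that $a\ne b$ are two distinct maximal elements of $L$. The four elements $a,b,x,y$ are pairwise distinct. Each of $a,b$ lies in $L$, and $x\parallel y$, so neither $x$ nor $y$ lies in $L$; hence neither equals $a$ or $b$. Also $a\parallel b$: if $a<_P b$, then $a$ would not be maximal in $L$.

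Next we read off the comparabilities. From $a,b\in L$ we get $a<_P x$, $a<_P y$, $b<_P x$ and $b<_P y$. These give four comparable pairs $ax, xb, by, ya$. The two remaining pairs, $\{x,y\}$ and $\{a,b\}$, are incomparable. Therefore the induced subgraph of $\Comp(P)$ on $\{a,b,x,y\}$ is the cycle $a\,x\,b\,y\,a$. This is an induced $C_4$, contradicting the hypothesis. Hence $L$ has a unique maximal element and is principal.

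The only step needing care is checking that the four vertices are distinct and that both diagonals are non-edges; the maximality of $a,b$ and the incomparability of $x,y$ supply exactly these. The dual statement for principal filters follows by the same argument with the order reversed. Equivalently, a pair with a common lower bound has an infimum, and a pair with a common upper bound has a supremum, which is the definition of IIC.
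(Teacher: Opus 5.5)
Your proof is correct and follows essentially the same route as the paper: in the incomparable case, two distinct maximal common lower bounds $a,b$ together with $x,y$ would induce $K_{2,2}=C_4$ in $\Comp(P)$, so the common lower bounds have a greatest element; the comparable case is trivial, and filters follow by duality. You also spell out a few details the paper leaves implicit, namely the distinctness of $a,b,x,y$ and the inclusion $\down m\subseteq L$, which holds because $m\le_P x$ and $m\le_P y$.
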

\begin{proof}
Consider incomparable elements $x,y$ having common lower bounds. Two distinct
maximal elements $a,b$ of the set of common lower bounds would be
incomparable and distinct from $x,y$. The four elements $a,b,x,y$ would
induce a $K_{2,2}=C_4$ in the comparability graph. Hence that finite,
nonempty set has a unique maximal element, which is its greatest element.
For comparable $x,y$, the smaller element is already their infimum. The
argument for upper bounds is dual.
\end{proof}

\begin{proposition}[Hereditary order-matrix certificate]\label{prop:hereditary}
Suppose $P$ has chain intervals and $G=\Comp(P)$ is $C_4$-free. Then $G$
is beautiful, and \eqref{eq:hereditary-matrix} holds with the natural
vertex-to-rectangle correspondence.
\end{proposition}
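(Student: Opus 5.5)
The plan is to reduce everything to \cref{prop:normal} applied to the induced subposets $P[W]$. Fix $W\subseteq V(G)$ and let $Q=P[W]$ be the induced subposet, i.e.\ the restriction of $\le_P$ to $W$. Two observations make $Q$ inherit what we need. First, $\Comp(Q)=G[W]$, which is an induced subgraph of a $C_4$-free graph and hence $C_4$-free; by \cref{lem:c4-iic}, $Q$ is IIC. Second, $Q$ has chain intervals, since an interval $[a,b]_Q$ is contained in $[a,b]_P$, which is a chain. Because the order matrix of $Q$ is exactly the principal submatrix $M_P[W,W]$, \cref{prop:normal} then gives $\cs(M_P[W,W])\cong\DB(Q)$. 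Moreover, its maximal nonempty 1-rectangles are the $R_p^Q=(\down_Q p)\times(\up_Q p)$ for $p\in W$, so the natural correspondence $p\mapsto R^Q_p$ is a bijection between $W$ and the vertices of $\cs(M_P[W,W])$. The empty case $W=\varnothing$ is handled by the $0\times 0$ convention.

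The remaining step is to show $\DB(Q)=\Comp(Q)$ as graphs on $W$. If $x<_Q y$, then $x$ is a common lower bound and $y$ a common upper bound, so $xy$ is an edge of $\DB(Q)$. Conversely, suppose $x,y$ are distinct and incomparable but have a common lower bound $a$ and a common upper bound $b$. Then $x,y\in[a,b]_Q$, which is a chain, so $x$ and $y$ are comparable, a contradiction. Hence $\DB(Q)=\Comp(Q)=G[W]$, and the isomorphism in \eqref{eq:hereditary-matrix} is the natural map $p\mapsto R^Q_p$. This is precisely the reformulation of chain intervals noted after the definition.

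Finally, since every induced subgraph $G[W]$ is isomorphic to $\cs(M_P[W,W])$, every induced subgraph lies in $\csg$, so $G$ is beautiful. The only step I expect to need care with is the hereditarity of the hypotheses, specifically checking that the principal submatrix is the order matrix of the induced subposet and that IIC is re-derived for $Q$ via \cref{lem:c4-iic} rather than inherited from $P$. IIC itself is not obviously hereditary, because an infimum in $P$ may lie outside $W$, which is why the $C_4$-freeness of $G[W]$ is the right tool here. Everything else is a direct unwinding of definitions.
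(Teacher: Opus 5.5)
Your proposal is correct and follows the paper's proof essentially step for step: restrict to $Q=P[W]$, obtain IIC from \cref{lem:c4-iic} via $C_4$-freeness of $G[W]$, use chain intervals of $Q$ to get $\DB(Q)=\Comp(Q)$, and apply \cref{prop:normal} to the principal submatrix $M_P[W,W]$. Your remark that IIC must be re-derived for $Q$ from $C_4$-freeness rather than inherited from $P$ is correct and makes explicit a point the paper leaves implicit.
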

\begin{proof}
For every $W\subseteq P$, the induced subposet $Q=P[W]$ has chain intervals
and comparability graph $G[W]$. This graph is still $C_4$-free, so
\cref{lem:c4-iic} makes $Q$ IIC. Comparable elements have both a common lower
bound and a common upper bound. Conversely, elements having both kinds of
bound lie in a common interval, which is a chain. Thus $\DB(Q)=\Comp(Q)$.
By \cref{prop:normal}, the order matrix of $Q$, which is $M_P[W,W]$, has
exactly the rectangles $(\down_Qq)\times(\up_Qq)$, for $q\in W$, and its
cs-graph is $G[W]$.
\end{proof}

\subsection{Short proofs of the necessary exclusions}\label{sec:exclusions}
The normal form gives a common starting point for the exclusions originally
proved in \cite[Theorem~3.6]{Wunderlich2009}.

\Needspace{8\baselineskip}
\begin{lemma}\label{lem:k4-order}
If $G$ is a $K_4$-free cs-graph, then $G=\Comp(P)$ for an IIC poset $P$
with chain intervals.
\end{lemma}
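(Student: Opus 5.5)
By \cref{prop:normal}, $G\cong\DB(P)$ for an IIC poset $P$ on $V(G)$, and I will show that this same $P$ already has chain intervals. That is enough. Comparable elements always have a common lower bound (the smaller one) and a common upper bound (the larger one), so $\Comp(P)\subseteq\DB(P)$ holds for every poset. If $P$ has chain intervals, the reverse inclusion follows as in the proof of \cref{prop:hereditary}: two elements with both kinds of common bound $a\le x,y\le b$ lie in the interval $[a,b]_P$, which is a chain, so they are comparable. Hence $\DB(P)=\Comp(P)$ and $G=\Comp(P)$.

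The main step is to show that a failure of the chain-interval condition produces a $K_4$ in $\DB(P)=G$. Suppose $x,y$ are incomparable and have a common lower bound and a common upper bound. By IIC, $m=x\wedge y$ and $s=x\vee y$ exist, and $m<x,y<s$ with $m\ne s$. I claim $\{m,x,y,s\}$ is a clique in $\DB(P)$. These are four distinct elements: $x,y$ are incomparable, so neither equals $m$ or $s$, and $m<x<s$. Each pair is adjacent for the following reasons.

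\begin{enumerate}
\item The pairs $mx$, $my$, $xs$, $ys$ and $ms$ are comparable, hence adjacent.
\item The pair $xy$ has the common lower bound $m$ and the common upper bound $s$.
\end{enumerate}

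So $G\supseteq K_4$, contradicting $K_4$-freeness. Note that IIC is not even needed for this step: any common lower bound $a$ and common upper bound $b$ give the clique $\{a,x,y,b\}$, with $a\ne b$ because $a<x<b$.

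The only delicate point is bookkeeping. The isomorphism of \cref{prop:normal} lets us place $P$ on $V(G)$, so the conclusion is that $G$ equals $\Comp(P)$ under the identification, for the same IIC poset $P$. I expect no real obstacle; the key observation is the four-element clique formed by an incomparable pair together with its two bounds.
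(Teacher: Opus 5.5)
Your proof is correct and is essentially the paper's argument: an incomparable pair with a common lower and a common upper bound (the paper takes $x\wedge y$ and $x\vee y$) forms, together with those two bounds, a $K_4$ in $\DB(P)$, which gives both $\DB(P)=\Comp(P)$ and the chain-interval property. The differences are minor. You derive chain intervals first and deduce $\DB(P)=\Comp(P)$ from them, whereas the paper proceeds in the opposite order, and your remark that IIC is not needed for the $K_4$ step is correct.
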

\begin{proof}
Use $G=\DB(P)$ from \cref{prop:normal}. If incomparable $x,y$ were adjacent in
$\DB(P)$, both $a=x\wedge y$ and $b=x\vee y$ would exist. The four elements
$a,x,y,b$ would be distinct and would share the lower bound $a$ and the
upper bound $b$. They would therefore form a $K_4$ in $\DB(P)$, a
contradiction. Hence $\DB(P)=\Comp(P)$. The same argument shows that an
interval of $P$ cannot contain incomparable elements.
\end{proof}

\begin{proposition}[Known exclusions, with order-theoretic proofs]\label{prop:excluded}
No member of $\F$ is a cs-graph.
\end{proposition}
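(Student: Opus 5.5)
Every member of $\F$ is $K_4$-free: $C_4$, the net and the odd holes have clique number at most $3$, and so do the gem (a $P_4$ plus one vertex) and the watch (its only triangles lie in the diamond). So I will argue by contradiction. If some $H\in\F$ were a cs-graph, \cref{lem:k4-order} would give $H=\Comp(P)$ for an IIC poset $P$ with chain intervals. It then suffices to show, for each member separately, that no such $P$ exists.

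\emph{Odd holes.} An odd hole is not a comparability graph, by the classical fact that comparability graphs are perfect. A direct argument also works. Take a transitive orientation of a hole $C_{2k+1}$ with $k\ge2$. If some vertex $v$ had one neighbour $u$ below and the other neighbour $w$ above, transitivity would force the non-edge $uw$, which is impossible. So every vertex is a source or a sink, orientations alternate around the cycle, and the cycle length must be even.

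\emph{$C_4$.} Its transitive orientations are, up to symmetry, of two kinds. The first is the alternating one, $a,c<b,d$. Here $a,c$ have the two incomparable minimal common upper bounds $b,d$, so they have no supremum and $P$ is not IIC. The second has a vertex with one neighbour below and one above; transitivity then forces a chord, which is impossible. Equivalently, I can appeal to \cref{lem:c4-iic} together with the proof of \cref{prop:hdb}.

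\emph{Gem.} Let $u$ be the universal vertex and $a\!-\!b\!-\!c\!-\!d$ the path. In any transitive orientation of the induced $P_4$, the vertices alternate between minimal and maximal. Say $a<b>c<d$, so $c<b$, $c<d$, and $a,c$ are minima, $b,d$ maxima within the path. Now place $u$.
\begin{itemize}
\item If $u$ lies below $b$ and below $d$, then $c<u$ or $u<c$ forces a relation between $a$ and $d$ or between $a$ and $c$ through $u$; tracking these via $a\sim u$ produces a contradiction.
\item More simply, $u$ cannot lie above a maximal element, such as $b$, and below another, because its relation to the non-neighbours of that element would be forced.
\end{itemize}
A cleaner path is to use the chain-interval condition. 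Since $u$ is comparable to all four path vertices, some path pair ends up in an interval with $u$. For instance, if $a<u<b$ failed for every choice, then $u$ would be above $a$ and $c$ and below $b$ and $d$. Then $[c,b]\ni u$, and also $a$ and $d$ lie above $c$ via $u$, which forces comparabilities absent from the $P_4$. I will do this case check carefully: place $u$ relative to the minima $\{a,c\}$ and maxima $\{b,d\}$, and verify that every consistent placement yields either a forbidden comparability or an interval $[x,y]$ containing two incomparable elements, for example $[c,d]$ or $[a,b]$ containing $u$ and another vertex.

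\emph{Net and watch.} Both have a triangle or diamond core with pendant leaves. In a transitive orientation, a triangle $a<b<c$ has a middle element $b$. Consider a leaf $x$ attached to a triangle vertex. If $x$ hangs off $b$, then $x<b$ forces $x<c$, and $x>b$ forces $x>a$; either way $x$ gains a second neighbour. So a leaf can attach only at an extremal element, and at the minimum it must lie below, at the maximum above. The net has leaves on all three triangle vertices, including the middle one, which is impossible. So the net is not even a comparability graph.

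For the watch, take diamond vertices $u,v$ of degree three and $a,b$ of degree two, with leaves at $u$ and $v$. Both triangles $uva$ and $uvb$ must have $u,v$ extremal, since $u$ and $v$ carry leaves. So $u<v$ without loss of generality, and $a,b$ are each either below $u$ or above $v$. If they are on the same side, $a$ and $b$ have a common bound on that side and also $u$ or $v$ on the other side. For instance, if $a,b<u$ then $a,b<u<v$, and $a,b$ are incomparable with common upper bounds $u$ and $v$. If they are on opposite sides, then $a<u<v<b$, which forces the non-edge $ab$. In the same-side case, with $u<v<x$ where $x$ is the leaf at $v$, the remaining subcase needs the chain-interval or IIC condition. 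I expect the main obstacle to be exactly this subcase, and the analogous gem placements, where transitivity alone does not suffice. There I will exhibit $a,b$ as incomparable elements with a common lower bound: the leaf at $u$ when it lies below $u$, or $a\wedge b$ forced by IIC, which must then be comparable to the leaf. Together with the common upper bound $u$, this places $a,b$ in one interval, contradicting chain intervals.
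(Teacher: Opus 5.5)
Your arguments for odd holes, $C_4$ and the net are correct and agree with the paper. The gem and watch cases are not proved, and the watch case is set up incorrectly.

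\emph{Watch.} Your leaf rule is reversed. For a leaf $x$ at the minimum $a$ of a triangle $a<b<c$, the option $x<a$ forces $x<b$, so the leaf lies \emph{above} the minimum; dually it lies below the maximum. Next, if $u<v$ are the extremal elements of both triangles $uva$ and $uvb$, then the tips are the middle elements: $u<a<v$ and $u<b<v$. Your split ``$a,b$ each below $u$ or above $v$'' excludes exactly this configuration, which is the only transitively consistent one. The cases you do examine already fail by transitivity. For instance, $a<u<v$ makes $u$ the middle element of the triangle $auv$, so by your own net observation its leaf is forced to be adjacent to $a$ or $v$. Hence the ``same-side subcase'' that you say needs IIC does not exist. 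Your proposed witness, the leaf at $u$ lying below $u$, is also impossible, since that leaf would then lie below $v$. The correct finish is short. With $u<v$, the leaves satisfy $u<p$ and $q<v$. A tip below $u$ would be below $p$, and a tip above $v$ would be above $q$. Hence $u<a,b<v$, and the interval $[u,v]$ contains the incomparable pair $a,b$.

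\emph{Gem.} No contradiction can come from transitivity alone, because the gem is a comparability graph: putting $u$ below all four path vertices, with $a<b>c<d$, is transitive. So your first bullet is false in the subcase $u<a$, $u<c$. Also, the intervals you suggest, $[c,d]$ or $[a,b]$ containing $u$, never arise. The missing step is to locate $u$. If nonadjacent path vertices $x,y$ satisfied $x<u<y$, transitivity would give $x<y$. The non-edges $ac,ad,bd$ connect all four path vertices, so all of them lie on the same side of $u$. Reversing the order if necessary, $u$ is below them all. With $a<b>c<d$, the interval $[u,b]$ contains the incomparable pair $a,c$; with $a>b<c>d$, the interval $[u,c]$ contains $b,d$. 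This configuration is the only place where the chain-interval hypothesis is needed.
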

\begin{proof}
Every member of $\F$ is $K_4$-free. If one were a cs-graph,
\cref{lem:k4-order} would supply an IIC order on its own vertex set whose
comparability graph is the given graph and whose intervals are chains.

\emph{Cycles.} An orientation of an induced cycle of length at least four
cannot contain a directed two-edge path: transitivity would require a chord.
Thus sources and sinks alternate. This is impossible on an odd cycle.
On $C_4$ it gives two incomparable lower elements and two incomparable upper
elements, with all four lower-to-upper comparisons. The lower pair then has
common upper bounds but no supremum, contrary to the IIC property.

\emph{Net.} Its triangle must be linearly ordered, say $a<b<c$. Let $z$ be
the leaf at $b$. If $z<b$, then $z<c$; if $b<z$, then $a<z$. Either option
forces a nonexistent edge incident with $z$.

\emph{Gem.} Let $u$ be the universal vertex and let $Q$ be the induced
$P_4$. A nonadjacent pair in $Q$ cannot lie on opposite sides of $u$ in
the order. Since the complement of $P_4$ is connected, all vertices of $Q$
lie on the same side of $u$. Reversing the order if necessary, $u$ is below
them all. A transitive orientation of $P_4$ alternates, and therefore has a
vertex $z$ with two nonadjacent predecessors in $Q$. These predecessors
both belong to $[u,z]$, contradicting the chain-interval property.

\emph{Watch.} Let $u,v$ be its diamond centres, $a,b$ its nonadjacent tips,
and $p,q$ the leaves at $u,v$, respectively. Assume $u<v$. The nonedges
$pv$ and $qu$ force $u<p$ and $q<v$. A tip $t\in\{a,b\}$ cannot satisfy
$t<u$, as that would force $t<p$; it cannot satisfy $v<t$, as that would
force $q<t$. Thus $u<t<v$ for both tips. The interval $[u,v]$ contains the
incomparable pair $a,b$, again a contradiction.
\end{proof}

\section{The structure forced by the forbidden families}\label{sec:structure}
We now turn to the sufficient direction of \cref{thm:main}. The next two
lemmas translate small forbidden configurations into closed-neighbourhood
containment.

\begin{lemma}[Nested neighbourhoods at diamond centres]\label{lem:diamond}
In a $(C_4,\gem,\watch)$-free graph, the two degree-three vertices $u,v$ of
any induced diamond satisfy
\[
 N[u]\subseteq N[v]\quad\text{or}\quad N[v]\subseteq N[u].
\]
\end{lemma}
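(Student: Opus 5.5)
We argue by contradiction. Let the induced diamond have centres $u,v$ (adjacent) and tips $a,b$, where $a,b$ are both adjacent to $u$ and $v$ and $ab\notin E$. Suppose neither closed neighbourhood contains the other. Then there is a vertex $p\in N[u]\setminus N[v]$ and a vertex $q\in N[v]\setminus N[u]$. Since $u\in N[v]$ and $v\in N[u]$, we have $p\ne u,v$ and $q\ne u,v$. Also $p,q\notin\{a,b\}$, because the tips lie in both neighbourhoods. Hence $p$ is adjacent to $u$ but not to $v$, and $q$ is adjacent to $v$ but not to $u$. In particular $p\ne q$. We look at the adjacencies of $p$ and $q$ to $a,b$ and to each other, and in every case exhibit an induced $C_4$, gem, or watch.

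\emph{Step 1: the edges from $p$ to the tips.} If $p$ is adjacent to both $a$ and $b$, then $p,a,v,b$ form an induced $C_4$: we have the edges $pa$, $av$, $vb$, $bp$, and the non-edges $pv$, $ab$. So $p$ sees at most one tip. If $p$ is adjacent to exactly one tip, say $a$, then $p-a-v-b$ is an induced $P_4$: $pv$ and $pb$ are non-edges, and so is $ab$. The vertex $u$ is adjacent to all four of these vertices, so we get an induced gem. Hence $p$ is adjacent to neither tip. By the symmetric argument with the roles of $u$ and $v$ exchanged, $q$ is adjacent to neither tip.

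\emph{Step 2: the edge $pq$.} If $pq\notin E$, then $\{u,v,a,b,p,q\}$ induces a watch: the diamond on $u,v,a,b$ together with a pendant leaf at each centre. If $pq\in E$, then $p,u,v,q$ form an induced $C_4$: the edges are $pu$, $uv$, $vq$, $qp$, and the non-edges are $pv$ and $uq$. Both outcomes contradict the hypothesis, so one of $N[u]\subseteq N[v]$ and $N[v]\subseteq N[u]$ holds.

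There is no real obstacle here. The only point needing care is the bookkeeping that $p$ and $q$ are distinct from $u$, $v$, $a$, $b$ and from each other, so that each exhibited configuration is genuinely induced on the stated vertices. Note also that the net is not used, matching the hypothesis of the statement.
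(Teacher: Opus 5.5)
Your proof is correct and follows the paper's argument step for step: you first use $C_4$ and the gem to show that a private neighbour of one centre sees neither tip, and then derive a $C_4$ or a watch according to whether the two private neighbours are adjacent. The explicit check that $p$ and $q$ are distinct from each other and from $u,v,a,b$ is left implicit in the paper.
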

\begin{proof}
Let $a,b$ be the nonadjacent tips. Suppose that $x$ is adjacent to $u$ but
not to $v$. If $x$ is adjacent to both tips, then $x,a,v,b$ induce a $C_4$.
If it is adjacent to exactly one tip, say $a$, then $x,a,v,b$ induce a
$P_4$ with universal vertex $u$, giving a gem. Consequently $x$ is
adjacent to neither tip. Dually, a vertex $y$ adjacent to $v$ but not to
$u$ is adjacent to neither tip.

If $N[u]$ and $N[v]$ were incomparable, both $x$ and $y$ would exist.
When $xy$ is an edge, $x,u,v,y$ induce a $C_4$. When it is not, the
original diamond together with $x,y$ induces a watch. Both are forbidden.
\end{proof}

\Needspace{9\baselineskip}
\begin{lemma}[Nesting below a common dominator]\label{lem:dominator}
In a $(C_4,\gem)$-free graph, suppose $xy\in E(G)$ and
\[
 N[x]\subseteq N[r],\qquad N[y]\subseteq N[r].
\]
Then $N[x]$ and $N[y]$ are comparable by inclusion.
\end{lemma}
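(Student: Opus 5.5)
We argue by contradiction, following the witness-vertex pattern of \cref{lem:diamond}. Suppose $N[x]$ and $N[y]$ are incomparable. Since $xy$ is an edge, $x\in N[y]$ and $y\in N[x]$, so there is a vertex $a\in N[x]\setminus N[y]$ with $a\ne x$, and a vertex $b\in N[y]\setminus N[x]$ with $b\ne y$. Thus $a$ is adjacent to $x$ but not to $y$, and $b$ is adjacent to $y$ but not to $x$. In particular $a\ne b$, and $a,b\notin\{x,y\}$.

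The two containments then place all four vertices inside $N[r]$. From $N[x]\subseteq N[r]$ we get $x,a\in N[r]$, and from $N[y]\subseteq N[r]$ we get $y,b\in N[r]$. We also need $r\notin\{x,y,a,b\}$ for the configuration below to be well defined.
\begin{enumerate}
\item If $r=x$, then $b\in N[y]\subseteq N[x]$, contradicting the choice of $b$.
\item If $r=y$, the symmetric argument applies to $a$.
\item If $r=a$, then $y\in N[x]\subseteq N[a]$, so $ay$ is an edge, contradicting the choice of $a$.
\item If $r=b$, the symmetric argument applies to $x$.
\end{enumerate}
Hence $r$ is a fifth vertex adjacent to each of $a,x,y,b$.

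Now split on whether $ab$ is an edge.
\begin{enumerate}
\item If $ab\notin E(G)$, then $a\!-\!x\!-\!y\!-\!b$ is an induced $P_4$: its edges are $ax,xy,yb$, and its nonedges are $ay$, $xb$ and $ab$. Together with the common neighbour $r$, these five vertices induce a gem, which is forbidden.
\item If $ab\in E(G)$, then $a,x,y,b$ form a $4$-cycle $a x y b a$. Its chords $ay$ and $xb$ are both absent, so it is an induced $C_4$, which is also forbidden.
\end{enumerate}
Either case contradicts the hypothesis, so $N[x]$ and $N[y]$ are comparable.

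The only delicate step is checking that $r$ is distinct from the four witness vertices, so that the gem really has five distinct vertices. This is exactly where each containment $N[x]\subseteq N[r]$ and $N[y]\subseteq N[r]$ is used, in the case analysis above. Everything else is a direct verification of the edges and nonedges among the five vertices.
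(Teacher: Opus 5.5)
Your proof is correct and follows essentially the same argument as the paper. You choose witnesses $a\in N[x]\setminus N[y]$ and $b\in N[y]\setminus N[x]$, get an induced $C_4$ when $ab$ is an edge, and otherwise get an induced $P_4$ that $r$ completes to a gem; your explicit check that $r\notin\{a,x,y,b\}$ spells out a step the paper only asserts.
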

\begin{proof}
Otherwise choose $a\in N[x]\setminus N[y]$ and
$b\in N[y]\setminus N[x]$. The vertices $a,x,y,b$ are distinct.
An edge $ab$ makes them an induced $C_4$; without that edge they induce a
$P_4$. In the latter case $r$ is distinct from all four vertices and adjacent
to all of them, because it dominates both closed neighbourhoods. This gives
a gem.
\end{proof}

\subsection{The graph of maximal closed neighbourhoods}\label{sec:core}
For the remainder of \cref{sec:structure,sec:orientation}, let $G$ be
$\F$-free. Choose a set $R$ containing one representative for each
\emph{distinct inclusion-maximal closed neighbourhood}, and put
\[
 H=G[R].
\]
For distinct $r,s\in R$, the sets $N[r]$ and $N[s]$ are incomparable.

\begin{proposition}\label{prop:core}
The graph $H$ is bipartite.
\end{proposition}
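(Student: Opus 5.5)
The plan is to reduce bipartiteness of $H$ to triangle-freeness, and then exclude triangles with \cref{lem:diamond} and the forbidden net. Since $H=G[R]$ is an induced subgraph of $G$, it is $\F$-free.

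\emph{Reduction.} Suppose $H$ is not bipartite and take a shortest odd cycle in $H$. A shortest odd cycle is chordless, since a chord would split it into a shorter odd cycle and an even one. Because $H$ is an induced subgraph of $G$, this cycle is also induced in $G$. If its length were at least five it would be an odd hole, which is excluded. Hence it is a triangle, and it suffices to show that $H$ contains no triangle.

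\emph{Private neighbours.} Suppose $r,s,t\in R$ are pairwise adjacent. Their closed neighbourhoods are pairwise incomparable, so there is some $a\in N[r]\setminus N[s]$. Note that $a\neq t$, because $t\in N[s]$, and $a\ne r$ since $r\in N[s]$.

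We claim $a\notin N[t]$. If $a$ were adjacent to $t$, then $r,t$ would be the degree-three centres of an induced diamond on $\{r,t,s,a\}$: here $s$ and $a$ are the nonadjacent tips, and each is adjacent to both $r$ and $t$. Since $G$ is $(C_4,\gem,\watch)$-free, \cref{lem:diamond} would make $N[r]$ and $N[t]$ comparable, which is a contradiction.

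Thus there is a vertex $x$ adjacent to $r$ but to neither $s$ nor $t$. By symmetry we also obtain $y$, adjacent to $s$ only among $r,s,t$, and $z$, adjacent to $t$ only. The three vertices $x,y,z$ are distinct, because their adjacencies to $\{r,s,t\}$ differ, and they are distinct from $r,s,t$.

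\emph{Net.} If $xy$ were an edge, then $x,r,s,y$ would induce a $C_4$: we have $xs\notin E$ and $ry\notin E$. The same holds for the pairs $\{x,z\}$ and $\{y,z\}$, so $x,y,z$ are pairwise nonadjacent. Then $\{r,s,t,x,y,z\}$ induces a net, which is excluded. Hence $H$ is triangle-free, and therefore bipartite.

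The key step is the use of \cref{lem:diamond} to show that a private neighbour of $r$ relative to $s$ is also private relative to $t$. The rest is routine.
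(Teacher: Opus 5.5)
Your proof is correct and follows the paper's argument essentially step for step. You use \cref{lem:diamond} to show that a private neighbour of $r$ relative to $s$ also avoids $t$, then use $C_4$-freeness to make the three private neighbours pairwise nonadjacent, which yields a net; a shortest odd cycle of length at least five would be an odd hole. The only differences are that you present the odd-cycle reduction first and verify the distinctness of the six vertices explicitly.
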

\begin{proof}
Suppose first that $r,s,t$ form a triangle in $H$. Since
$N[r]\nsubseteq N[s]$, there is a vertex $x$ adjacent to $r$ but not to $s$.
It cannot be adjacent to $t$: otherwise $r,t$ would be the centres of an
induced diamond with tips $s,x$, contradicting \cref{lem:diamond} and the
incomparability of $N[r],N[t]$. Hence $x$ is adjacent to exactly $r$ among
the triangle vertices.

Cyclically obtain vertices $y,z$ adjacent to exactly $s,t$, respectively,
among the triangle vertices. These three vertices are distinct and pairwise
nonadjacent. For example, an edge $xy$ would produce the induced cycle
$x,r,s,y,x$. Thus the six vertices induce a net, a contradiction. This
proves that $H$ is triangle-free.

If $H$ were not bipartite, a shortest odd cycle in $H$ would be induced
and have length at least five. Since $H$ is induced in $G$, this would be
an odd hole in $G$. Therefore $H$ is bipartite.
\end{proof}

Fix a bipartition $R=L\dotcup U$. For a vertex $v$ define its
\emph{maximal dominators} by
\begin{equation}\label{eq:dom}
 D(v)=\{r\in R:N[v]\subseteq N[r]\}.
\end{equation}
Finiteness implies $D(v)\ne\varnothing$. Moreover, $D(v)$ is a clique in
$H$. Indeed, if distinct $r,s$ dominate $v$, then
$r\in N[v]\subseteq N[s]$. As $H$ is bipartite,
\begin{equation}\label{eq:dom-size}
 |D(v)|\in\{1,2\}.
\end{equation}
We therefore obtain a partition into regions
\begin{align}
 V_r&=\{v:D(v)=\{r\}\}, &&r\in R,\label{eq:single-region}\\
 W_{rs}&=\{v:D(v)=\{r,s\}\}, &&rs\in E(H).\label{eq:double-region}
\end{align}
In particular $r\in V_r$, and a two-dominator region can occur only on an
edge of $H$. The sets $W_{rs}$ may be empty.

\subsection{Regions and their interfaces}\label{sec:interfaces}
For an edge $rs\in E(H)$, with a chosen ordering of its endpoints, define
\begin{equation}\label{eq:interfaces}
 K_{rs}=N[r]\cap N[s],\qquad
 A_{rs}=V_r\cap N[s],\qquad B_{rs}=V_s\cap N[r].
\end{equation}
For the orientation we will always choose $r\in L$, $s\in U$.

\Needspace{13\baselineskip}
\begin{lemma}\label{lem:regions}
The following properties hold.
\begin{enumerate}[label=\textnormal{(\roman*)}]
\item $K_{rs}$ is a clique and
\begin{equation}\label{eq:interface-partition}
 K_{rs}=A_{rs}\dotcup W_{rs}\dotcup B_{rs}.
\end{equation}
Every $w\in W_{rs}$ satisfies $N[w]=K_{rs}$. Thus a nonempty $W_{rs}$ is
a clique of simplicial true twins.
\item For distinct $x,y\in V_r$, adjacency is equivalent to comparability
of $N[x],N[y]$ by inclusion. Reverse inclusion, with consistent linear
tie-breaking within each true-twin class, defines a poset $T_r$ on $V_r$
whose principal ideals are chains.
\item $A_{rs}$ is a predecessor-closed chain in $T_r$, and $B_{rs}$ is a
predecessor-closed chain in $T_s$.
\item Every edge between different regions belongs to an interface $K_{rs}$.
There are no edges between distinct two-dominator regions. Every vertex of
$W_{rs}$ has neighbours outside $W_{rs}$ exactly $A_{rs}\cup B_{rs}$.
\end{enumerate}
\end{lemma}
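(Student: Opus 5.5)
We prove the four parts in the order (i), (ii), (iii), (iv). The tools are \cref{lem:diamond,lem:dominator}, the bipartiteness of $H$ from \cref{prop:core}, and \eqref{eq:dom-size}. The recurring trick is that $r\in D(v)$ gives $N[v]\subseteq N[r]$, so any neighbour of $v$ is a neighbour of $r$.

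For (i), let $x,y\in K_{rs}$ be distinct. If one of them is $r$ or $s$, adjacency is immediate. Otherwise, a nonedge $xy$ makes $r,s,x,y$ an induced diamond with centres $r,s$. Then \cref{lem:diamond} makes $N[r],N[s]$ comparable, contradicting the choice of $R$. Hence $K_{rs}$ is a clique.

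For the partition, take $v\in K_{rs}$ and any $t\in D(v)$. Then $r,s\in N[v]\subseteq N[t]$. If $t\notin\{r,s\}$, then $r,s,t$ is a triangle in $H$, which is impossible. So $D(v)\subseteq\{r,s\}$, and $v$ lies in $V_r\cap N[s]=A_{rs}$, in $W_{rs}$, or in $V_s\cap N[r]=B_{rs}$. Conversely, $A_{rs},B_{rs}\subseteq K_{rs}$ by definition. A vertex $w\in W_{rs}$ satisfies $w\in N[w]\subseteq N[r]\cap N[s]$. For $w\in W_{rs}$ we have $N[w]\subseteq K_{rs}$, and since $K_{rs}$ is a clique containing $w$, also $K_{rs}\subseteq N[w]$. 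Thus $N[w]=K_{rs}$, and the vertices of $W_{rs}$ are simplicial true twins.

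For (ii), take distinct $x,y\in V_r$. If they are adjacent, both closed neighbourhoods lie in $N[r]$, so \cref{lem:dominator} makes them comparable. Conversely, $N[x]\subseteq N[y]$ gives $x\in N[y]$. Define $x\le_{T_r}y$ when $N[x]\supseteq N[y]$, breaking ties inside twin classes by a fixed linear order; this is clearly a partial order.

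For the chain property, let $y,z$ lie in the principal ideal of $x$, so $N[y],N[z]\supseteq N[x]$. Then $y\in N[x]\subseteq N[z]$, so $y,z$ are adjacent, hence comparable by (ii). The ideal is therefore a chain.

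For (iii), the set $A_{rs}$ lies in the clique $K_{rs}$, so its elements are pairwise adjacent and hence form a chain in $T_r$ by (ii). For predecessor-closure, let $y\le_{T_r}x$ with $x\in A_{rs}$. Then $N[y]\supseteq N[x]\ni s$, so $y\in A_{rs}$. The argument for $B_{rs}$ is symmetric.

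For (iv), consider an edge $xy$ whose endpoints lie in different regions. If $x\in W_{rs}$, then $y\in N[x]=K_{rs}$, and by (i) $y\in A_{rs}\cup B_{rs}$ since $y\notin W_{rs}$. This single observation gives all three claims for two-dominator regions: no edges join distinct $W$'s, and the outside neighbours of $W_{rs}$ are exactly $A_{rs}\cup B_{rs}$.

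The remaining case is $x\in V_r$, $y\in V_s$ with $r\ne s$. Then $y\in N[x]\subseteq N[r]$ and $x\in N[y]\subseteq N[s]$. Moreover $s\in N[x]\subseteq N[r]$, so $rs\in E(H)$ and $x,y\in K_{rs}$.

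The only real obstacle is the triangle-freeness step in (i), which pins $D(v)$ inside $\{r,s\}$; after that every part is routine bookkeeping. One must still check that the tie-breaking in (ii) keeps the relation antisymmetric and does not disturb the chain property of ideals.
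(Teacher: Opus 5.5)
Your proposal is correct and follows the paper's proof essentially step for step: \cref{lem:diamond} for the clique property, triangle-freeness of $H$ to confine $D(v)$ to $\{r,s\}$, \cref{lem:dominator} for the local order $T_r$, and the same domination bookkeeping for (iii) and (iv). The tie-breaking check you leave open at the end is already covered by your own argument, since $y\le_{T_r}x$ still implies $N[y]\supseteq N[x]$, and antisymmetry follows from the linear order within each equality class.
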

\begin{proof}
If two vertices of $K_{rs}$ were nonadjacent, they would be the tips of a
diamond with centres $r,s$, contradicting \cref{lem:diamond}. Hence $K_{rs}$
is a clique. For $w\in W_{rs}$, domination gives $N[w]\subseteq K_{rs}$.
Since $w\in K_{rs}$ and the latter is a clique, equality holds.

For any $z\in K_{rs}$ and $t\in D(z)$, the representative $t$ is adjacent
to both $r,s$ unless it equals one of them. A third representative would
form a triangle in $H$. Consequently $D(z)\subseteq\{r,s\}$, proving
\eqref{eq:interface-partition} and~(i).

For vertices of $V_r$, adjacency implies neighbourhood nesting by
\cref{lem:dominator}. Conversely, containment of two distinct vertices'
closed neighbourhoods implies adjacency. Choose a linear order within
each equality class and refine reverse inclusion by these orders. This
gives a partial order $T_r$ with comparability graph $G[V_r]$. If
$x,y\le_{T_r}z$, then $N[z]\subseteq N[x]\cap N[y]$. Symmetry of adjacency
gives $x\in N[z]\subseteq N[y]$, so $x,y$ are adjacent or equal and
therefore comparable in $T_r$. Every principal ideal is a chain, proving~(ii).

The set $A_{rs}$ is a clique, being a subset of $K_{rs}$, and hence a chain
in $T_r$. If $x\in A_{rs}$ and $y\le_{T_r}x$, then
$N[x]\subseteq N[y]$. As $s\in N[x]$, also $s\in N[y]$, so $y\in A_{rs}$.
The argument for $B_{rs}$ is identical. This proves~(iii).

Finally, if $x\in V_r$ and $y\in V_s$ are adjacent, where $r\ne s$,
domination gives $rs\in E(H)$, $x\in N[s]$, and $y\in N[r]$. Thus both
vertices lie in $K_{rs}$. All assertions involving $W_{rs}$ follow from
$N[w]=K_{rs}$ and \eqref{eq:interface-partition}. In particular, a neighbour
of $W_{rs}$ cannot lie in a different two-dominator region. This proves~(iv).
\end{proof}

The single-dominator regions therefore have local orders with chain
intervals. Their interfaces join predecessor-closed chains, with a possible
simplicial true-twin clique between them. Arbitrary gluing of beautiful
graphs would not provide this compatibility information.

\section{The global orientation}\label{sec:orientation}
We orient all edges of the $\F$-free graph $G$ using the bipartition
$R=L\dotcup U$ and the regions above.
\begin{enumerate}[label=\textnormal{(\arabic*)}]
\item Inside $V_r$, for $r\in L$, use $T_r$: orient from larger to smaller
closed neighbourhoods, with the chosen tie-breaking.
\item Inside $V_s$, for $s\in U$, reverse $T_s$: orient from smaller to
larger closed neighbourhoods, reversing the same tie-breaking.
\item Order each $W_{rs}$ linearly. For $r\in L$, $s\in U$, orient the
interface clique in the block order
\begin{equation}\label{eq:interface-orientation}
 A_{rs}\ \longrightarrow\ W_{rs}\ \longrightarrow\ B_{rs},
\end{equation}
including every edge from $A_{rs}$ to $B_{rs}$. The orders within $A_{rs}$
and $B_{rs}$ are those already fixed in their regions.
\end{enumerate}
The prescription applies also when $W_{rs}=\varnothing$.
By \cref{lem:regions}, every edge receives one consistent orientation.
An internal region edge keeps its fixed orientation wherever it occurs
in an interface; a cross-region edge identifies its two representatives,
and an edge incident with a two-dominator region belongs to its single
interface.

\begin{lemma}\label{lem:transitive}
The resulting orientation is transitive.
\end{lemma}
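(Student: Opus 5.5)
The plan is to check transitivity directly: for every directed path $x\to y\to z$, I will show that $xz\in E(G)$ and that this edge is oriented $x\to z$. The argument rests on one global observation about how arcs move between regions, followed by a short case analysis on the regions containing $x,y,z$.

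\emph{Step 1 (flow between regions).} By \cref{lem:regions}(iv), every edge joining different regions lies in some interface $K_{rs}$ with $r\in L$, $s\in U$. On that interface, \eqref{eq:interface-orientation} orients all such edges from $A_{rs}\subseteq V_r$ into $W_{rs}$ or into $B_{rs}\subseteq V_s$, and from $W_{rs}$ into $B_{rs}$. So no arc enters a region $V_r$ with $r\in L$ from outside it, and no arc leaves a region $V_s$ with $s\in U$. Moreover, by \cref{lem:regions}(iv) a vertex of $W_{rs}$ has outside neighbours only in $A_{rs}\cup B_{rs}$. Hence along a directed path the regions are visited monotonically: first possibly one $L$-region $V_r$, then possibly $W_{rs}$, then possibly $V_s$. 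A path of length two touching two regions also touches only the one interface $K_{rs}$ determined by its cross arc.

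\emph{Step 2 (cases).}
\begin{enumerate}[label=\textnormal{(\alph*)}]
\item All three vertices lie in one region. For $V_r$ with $r\in L$, the arcs are those of the poset $T_r$ of \cref{lem:regions}(ii), so the orientation is transitive. For $V_s$ with $s\in U$ they are those of the reverse of $T_s$, which is also transitive. A region $W_{rs}$ carries a linear order, so it is transitive as well.
\item $x,y\in V_r$ with $r\in L$, and $z\notin V_r$. Then $y\in A_{rs}$, and $x$ lies on the appropriate side of $y$ in $T_r$. Predecessor-closedness of $A_{rs}$ (\cref{lem:regions}(iii)) should give $x\in A_{rs}$. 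Since $K_{rs}$ is a clique containing $x$ and $z$, the edge $xz$ exists, and the block order orients it $x\to z$.
\item $y,z\in V_s$ with $s\in U$, and $x\notin V_s$. This is dual to (b): $y\in B_{rs}$, and $z$ is adjacent to $y$ in the reversed $T_s$, so $z\in B_{rs}$ by predecessor-closedness. The edge $xz$ then lies in the clique $K_{rs}$ and is oriented $x\to z$ by \eqref{eq:interface-orientation}.
\item The three vertices lie in three distinct blocks, $x\in A_{rs}$, $y\in W_{rs}$, $z\in B_{rs}$. The edge $xz$ lies in the clique $K_{rs}$ and is oriented $A_{rs}\to B_{rs}$.
\item Two of the vertices lie in $W_{rs}$, and the third lies in $A_{rs}$ or $B_{rs}$. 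Again everything stays inside the clique $K_{rs}$, and the block order on $K_{rs}$, taken together with the linear order on $W_{rs}$, is transitive.
\end{enumerate}

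\emph{Main obstacle.} The delicate point is the sign convention in (b) and (c). One must verify that ``orient from larger to smaller closed neighbourhoods'' in $T_r$ sends arcs out of $A_{rs}$ only towards elements that are again forced into $A_{rs}$ when the path continues outward; dually, the reversed order on $V_s$ must keep successors of $B_{rs}$ inside $B_{rs}$. I would check this first using the explicit inclusion argument in the proof of \cref{lem:regions}(iii). That argument shows that $N[x]\subseteq N[y]$ together with $x\in N[s]$ forces $y\in N[s]$. I would use it in the form: any vertex of $V_r$ whose closed neighbourhood contains that of a vertex of $A_{rs}$ is again in $A_{rs}$, and then confirm that this is exactly the direction $x\to y$ in (b). The twin tie-breaks need no separate treatment, since true twins have equal neighbourhoods and the same containment argument applies to them.
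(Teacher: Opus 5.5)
Your proof is correct and uses the same ingredients as the paper: no arc enters a lower-side region or leaves an upper-side region, arcs inside $V_r$ (resp.\ $V_s$) run from larger to smaller (resp.\ smaller to larger) closed neighbourhoods, and $K_{rs}$ is linearly oriented. The paper merely organizes the cases by the region of the middle vertex $y$ and gets the edge $xz$ directly from $z\in N[y]\subseteq N[x]$, rather than through predecessor-closedness of $A_{rs}$ and $B_{rs}$. The sign check you flag comes out right, but the fact needed in (b) concerns arcs \emph{into} $y\in A_{rs}$, not arcs out of $A_{rs}$ as your last paragraph phrases it.
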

\begin{proof}
Consider a directed two-edge path $x\to y\to z$. If $y\in W_{rs}$, all
its neighbours belong to $K_{rs}$, which is linearly oriented. Thus the
edge $x\to z$ is present.

If $y\in V_r$ with $r\in L$, every incoming edge at $y$ comes from $V_r$:
all cross-region edges leave a lower-side region. Hence $x\in V_r$ and
$N[y]\subseteq N[x]$. If also $z\in V_r$, transitivity follows from $T_r$.
Otherwise $z\in N[y]\subseteq N[x]$, so $xz$ is an edge, and its
cross-region orientation is $x\to z$.

If $y\in V_s$ with $s\in U$, every outgoing edge stays in $V_s$. Thus
$z\in V_s$ and $N[y]\subseteq N[z]$. If $x\in V_s$, use the local order.
Otherwise $x\in N[y]\subseteq N[z]$, and the cross-region rule again
gives $x\to z$.
\end{proof}

A transitive orientation of a simple graph is acyclic: repeated transitivity
along a directed cycle would force both directions of an edge. Its reflexive
closure is therefore a poset $P$, and
\begin{equation}\label{eq:comp-global}
 \Comp(P)=G.
\end{equation}

\begin{lemma}\label{lem:chain-intervals}
Every interval of $P$ is a chain.
\end{lemma}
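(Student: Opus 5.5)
Throughout, let $\le$ denote $\le_P$. I would argue by contradiction. Suppose some interval $[a,b]_P$ contains incomparable elements $x,y$, so $a<x<b$ and $a<y<b$. By \eqref{eq:comp-global}, $x$ and $y$ are nonadjacent in $G$, while $a$ and $b$ are adjacent to each other and to both $x$ and $y$. The plan is a case analysis on the regions containing the top element $b$ and the bottom element $a$. In each case I use the one-sided behaviour of the orientation already exploited in \cref{lem:transitive}:
\begin{itemize}
\item incoming edges at a vertex of $V_r$ with $r\in L$ come only from $V_r$;
\item outgoing edges at a vertex of $V_s$ with $s\in U$ stay inside $V_s$;
\item a vertex of $W_{rs}$ has all its neighbours in the clique $K_{rs}$.
\end{itemize}

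\emph{Two-dominator regions.} If $b\in W_{rs}$ or $a\in W_{rs}$, then $x,y\in N(b)$, respectively $x,y\in N(a)$, lies inside $K_{rs}$ by \cref{lem:regions}(i),(iv). Since $K_{rs}$ is a clique, $x$ and $y$ are adjacent, a contradiction.

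\emph{Top in a lower-side region.} Suppose $b\in V_r$ with $r\in L$. The edges $x\to b$ and $y\to b$ are incoming at $b$, so $x,y\in V_r$, and by rule (1) they are oriented according to $T_r$. Rule (1) orients from larger to smaller closed neighbourhood, which matches the proof of \cref{lem:regions}(ii), where $x\le_{T_r}z$ gives $N[z]\subseteq N[x]$. Hence $x,y\le_{T_r}b$. Principal ideals of $T_r$ are chains, so $x$ and $y$ are comparable, a contradiction.

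\emph{Bottom in an upper-side region.} Dually, suppose $a\in V_s$ with $s\in U$. The edges $a\to x$ and $a\to y$ are outgoing at $a$, so $x,y\in V_s$. Under the reversed order of rule (2) this means $x,y\le_{T_s}a$, and the same chain property gives the contradiction.

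\emph{Remaining case.} What is left is $a\in V_r$ with $r\in L$ and $b\in V_t$ with $t\in U$; in particular $r\ne t$. Since $ab$ is a cross-region edge, \cref{lem:regions}(iv) gives $rt\in E(H)$ and $a,b\in K_{rt}$. The main step is to show that every common neighbour $x$ of $a$ and $b$ also lies in $K_{rt}$. I would split on the region of $x$.
\begin{itemize}
\item If $x\in V_r$, then $xb$ is a cross-region edge, so $x\in A_{rt}$.
\item If $x\in V_t$, then $ax$ is a cross-region edge, so $x\in B_{rt}$.
\item If $x\in V_q$ for some $q\notin\{r,t\}$, then both edges $xa$ and $xb$ are cross-region. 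This forces $qr,qt\in E(H)$, and then $q,r,t$ form a triangle in the bipartite graph $H$ (\cref{prop:core}), which is impossible.
\item If $x\in W_{qq'}$, then $a,b\in N(x)\subseteq K_{qq'}$. By the interface partition \eqref{eq:interface-partition}, $D(a)=\{r\}$ and $D(b)=\{t\}$ are both contained in $\{q,q'\}$. Hence $\{q,q'\}=\{r,t\}$ and $x\in W_{rt}\subseteq K_{rt}$.
\end{itemize}
The same argument applies to $y$. So $x,y\in K_{rt}$, which is a clique, contradicting their nonadjacency.

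I expect this last case to be the only real obstacle, because it is the one place where the two regions meet and the global bipartite structure of $H$ has to be invoked. The first three cases reduce directly to the local chain property of $T_r$ from \cref{lem:regions}(ii).
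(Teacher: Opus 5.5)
Your proof is correct and is essentially the paper's argument, recast as a contradiction on an incomparable pair. It uses the same one-sided orientation of cross-region edges, the chain property of principal ideals of $T_r$ from \cref{lem:regions}\textnormal{(ii)}, and the clique $K_{rt}$ in the mixed case (where the paper uses the restricted shape of directed paths instead), although your four-way split there is unnecessary, since $D(a)=\{r\}$ and $D(b)=\{t\}$ give directly $N[a]\cap N[b]\subseteq N[r]\cap N[t]=K_{rt}$.
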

\begin{proof}
Directed paths can pass through at most one lower-side region $V_r$, then
possibly its two-dominator region $W_{rs}$, and then at most one upper-side
region $V_s$. Some parts may be absent. A path cannot move between
lower-side regions, leave an upper-side region, or move between distinct
two-dominator regions. These assertions follow from
\cref{lem:regions}\textnormal{(iv)} and the direction of every cross-region edge.

Consider an interval $[x,z]_P$. If both endpoints lie in the same $V_r$,
a directed path between them cannot leave and re-enter that region.
The entire interval therefore stays in $V_r$. It is an interval of $T_r$
or its reverse, and hence is a chain by
\cref{lem:regions}\textnormal{(ii)}.

If an endpoint belongs to $W_{rs}$, every element of the interval is equal
or adjacent to that endpoint. Its closed neighbourhood is $K_{rs}$, so the
interval lies in the linearly oriented clique $K_{rs}$.

It remains that $x\in V_r$ and $z\in V_s$ are in distinct single-dominator
regions. Necessarily $r\in L$, $s\in U$, and $rs\in E(H)$.
Every intermediate element lies in $V_r$, $W_{rs}$, or $V_s$, by the
restricted path shape. An intermediate element in $V_r$ is adjacent to $z$
by transitivity and hence lies in $A_{rs}$. Dually, one in $V_s$ lies in
$B_{rs}$. The endpoints also belong to these respective sets. Thus the
whole interval lies in the linearly oriented $K_{rs}$ and is a chain.
\end{proof}

\begin{proof}[Proof of \cref{thm:main}]
If $G$ is beautiful, none of its induced subgraphs can be a member of $\F$,
by \cref{prop:excluded}. This proves \textnormal{(i)}$\Rightarrow$\textnormal{(ii)}.
For an $\F$-free graph, the construction above and
\cref{lem:transitive,lem:chain-intervals} give a poset $P$ with chain intervals
and comparability graph $G$. Since $G$ is $C_4$-free, this proves
\textnormal{(ii)}$\Rightarrow$\textnormal{(iii)}.
Finally, \cref{prop:hereditary} proves
\textnormal{(iii)}$\Rightarrow$\textnormal{(i)} together with the
simultaneous principal-submatrix representations.

Statement \textnormal{(iv)} implies \textnormal{(ii)}, since a Berge graph
has no odd hole. Conversely, an $\F$-free graph also has no odd antihole:
the complement of $C_5$ is $C_5$, and for $m\ge7$ the cyclically labelled
vertices $0,1,3,4$ induce $2K_2$ in $C_m$, hence a $C_4$ in its complement.
Thus \textnormal{(ii)} implies \textnormal{(iv)}. The empty graph is
immediate under the stated convention.
\end{proof}

\section{Consequences and examples}\label{sec:consequences}

\subsection{Position among Berge graph classes}\label{sec:berge}
The original Berge containment is recovered directly from the
characterization. No appeal to the strong perfect graph theorem is needed
for this observation.

\begin{corollary}\label{cor:berge}
Every beautiful graph is Berge. Moreover,
\begin{equation}\label{eq:proper-inclusions}
 \mathsf{Bipartite}\cap\Forb(C_4)
 \ \subsetneq\ \beautiful
 \ \subsetneq\ \mathsf{Comparability}\cap\Forb(C_4).
\end{equation}
\end{corollary}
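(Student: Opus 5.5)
The plan is to read both parts of the corollary off \cref{thm:main}, with a concrete witness for each strict inclusion. For Berge containment, use the equivalence (i)$\Leftrightarrow$(iv): a beautiful graph has no odd hole, since each odd hole lies in $\F$, and no odd antihole. For the antiholes, $\overline{C_5}\cong C_5$ is itself an odd hole. For $m\ge7$, the vertices $0,1,3,4$ of $C_m$ induce $2K_2$, so they induce a $C_4$ in $\overline{C_m}$. This is exactly the argument already recorded at the end of the proof of \cref{thm:main}, and I would simply point to it.

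For the left inclusion, let $H$ be bipartite and $C_4$-free. It has no odd cycle at all, so no odd hole. Every other member of $\F$ contains a triangle: the gem, the net and the watch (via its diamond) all do. Hence $H$ is $\F$-free, and by (ii)$\Rightarrow$(i) it is beautiful. One could alternatively orient $H$ from one side to the other; this gives a height-two poset with chain intervals, and \cref{prop:hereditary} applies. The inclusion is strict because $K_3$ is beautiful but not bipartite. Indeed $K_3$ is $\F$-free, as every member of $\F$ has at least four vertices; equivalently, $K_3=\Comp$ of a $3$-chain.

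For the right inclusion, use (i)$\Rightarrow$(iii): every beautiful graph is $C_4$-free and is the comparability graph of some poset. For strictness, I need a $C_4$-free comparability graph outside $\beautiful$. The gem is the natural candidate. It is $C_4$-free, and it is a comparability graph, because $P_4$ has a transitive orientation and adding a universal vertex as a global minimum keeps the orientation transitive. But the gem lies in $\F$, so it is not beautiful by \cref{prop:excluded}. The net would also work, since it is $C_4$-free and a transitive orientation can be written down explicitly.

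The only step needing any care is checking that the chosen witness really is a comparability graph. For the gem, the order is $u<a$, $u<b$, $u<c$, $u<d$ together with the alternating orientation $a<b>c<d$ of the path. I would verify transitivity explicitly: every directed two-path either starts at $u$, and then $u$ is below everything, or is a two-path inside $P_4$, and the alternating orientation has none. Everything else in the proof is bookkeeping from \cref{thm:main}.
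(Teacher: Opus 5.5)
Your argument is correct and is essentially the paper's proof. The Berge part comes from \cref{thm:main}(iv). The left inclusion comes from the $\F$-freeness of $C_4$-free bipartite graphs, with $K_3$ showing strictness. The right strictness is witnessed by the gem, with $u$ below the alternating orientation $a<b>c<d$.

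One side remark is false, though your proof does not depend on it. The net is not a comparability graph at all, so it cannot replace the gem. Its triangle must be a chain $a<b<c$, and the leaf at $b$ then forces a missing edge, exactly as in the net case of \cref{prop:excluded}, whose argument uses only transitivity.
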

\begin{proof}
The Berge assertion is \cref{thm:main}\textnormal{(iv)}.

A $C_4$-free bipartite graph avoids $\F$: it has no odd cycle, and each
of gem, net, and watch contains a triangle. The first inclusion follows
from \cref{thm:main}, and $K_3$ makes it proper. The second inclusion is
part of that theorem. To see that it is proper, the gem is $C_4$-free and
is a comparability graph: for its path $a,b,c,d$ and universal vertex $u$,
take $u$ below all four path vertices, with $a<b$, $c<b$, and $c<d$.
This is a transitive orientation of the gem. The gem is not beautiful.
\end{proof}

Theorem~\ref{thm:main} also distinguishes beautiful graphs from familiar
Berge subclasses in \cite{BrandstadtLeSpinrad1999,ISGCI}.
\Cref{tab:separations} gives explicit witnesses of incomparability with
five such classes; these comparisons are not inferred merely from the
absence of a class name in a catalogue. The class distinctions motivate
the classification, whereas the new assertion is the exact completeness
of $\F$.

\begin{table}[htbp]
\centering
\begin{tabular}{@{}lcc@{}}
\toprule
Class $\mathcal C$ & $\beautiful\setminus\mathcal C$
                  & $\mathcal C\setminus\beautiful$\\
\midrule
Chordal                & $C_6$ & $\gem$\\
Ptolemaic              & $C_6$ & $\net$\\
Distance-hereditary    & $C_6$ & $C_4$\\
Weakly chordal         & $C_6$ & $C_4$\\
Cographs               & $P_4$ & $C_4$\\
\bottomrule
\end{tabular}
\caption{Separating examples for selected established Berge classes.
Each column contains a graph in the indicated set difference.}
\label{tab:separations}
\end{table}

Here chordal graphs have no holes, weakly chordal graphs and their
complements have no induced cycles of length at least five, and cographs
are $P_4$-free. Ptolemaic graphs are the chordal distance-hereditary graphs
\cite{Howorka1981,BrandstadtLeSpinrad1999}; a graph is distance-hereditary
when each connected induced subgraph preserves distances. The gem and net
are chordal, and the net is distance-hereditary: a connected induced
subgraph with at least two vertices has a clique of surviving triangle
vertices and any surviving leaves attach only to their original triangle vertices, so distances do
not change. The graph $C_4$ is distance-hereditary, weakly chordal, and
$P_4$-free, but is not beautiful. The beautiful graph $C_6$ belongs to
none of the first four classes: in particular, deleting one vertex
leaves a $P_5$ whose endpoints have distance four rather than two.
Finally $P_4$ is beautiful but not a cograph. These observations verify
all entries of \cref{tab:separations}.

\subsection{The minimal forbidden induced subgraphs}
\begin{corollary}\label{cor:minimal}
The minimal forbidden induced subgraphs for $\beautiful$ are exactly
\[
 C_4,\quad \gem,\quad \net,\quad \watch,\quad C_5,C_7,C_9,\ldots.
\]
Thus four individual graphs and one infinite cycle family suffice, whereas
a finite list of individual forbidden graphs cannot suffice.
\end{corollary}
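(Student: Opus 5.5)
By \cref{thm:main}, $\beautiful=\Forb(\F)$. The corollary therefore reduces to two facts about $\F$. First, no member of $\F$ is beautiful; this already follows from \cref{prop:excluded}. Second, every proper induced subgraph of a member of $\F$ is beautiful. Given both, each member of $\F$ is a minimal non-beautiful graph. Conversely, any minimal non-beautiful graph contains an induced member of $\F$ by (ii)$\Rightarrow$(i) of \cref{thm:main}, so by minimality it equals that member. The plan is to verify the second fact, that $\F$ is an antichain of minimal obstructions, and then to deduce the finiteness statement.

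For minimality, it suffices to delete a single vertex from each $F\in\F$, since being beautiful is hereditary. We then check that $F-v$ contains no induced member of $\F$; by \cref{thm:main} this makes $F-v$ beautiful.
\begin{enumerate}
\item \emph{Odd holes $C_{2k+1}$ and $C_4$.} Deleting a vertex leaves a path. Paths are bipartite and $C_4$-free, so they lie in $\beautiful$ by \cref{cor:berge}.
\item \emph{Gem, net and watch.} Each has at most six vertices. A proper induced subgraph has at most five vertices, so the only members of $\F$ it could contain are $C_4$, $C_5$ or the gem.
\begin{itemize}
\item The net is chordal, so it has no $C_4$ or $C_5$. Its 5-vertex subgraphs have at most one vertex of degree $\ge3$ and are not gems, since the gem has a vertex of degree~4 and the net has maximum degree~3.
\item The watch has maximum degree~3, so it contains no gem. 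It is chordal, since its only cycles lie in the diamond, which has a chord.
\item The gem itself is chordal. Its proper induced subgraphs have four vertices, so none of them is a gem.
\end{itemize}
\end{enumerate}
This is routine inspection and I expect no obstacle. Alternatively, one can exhibit explicit chain-interval orders for the vertex-deleted subgraphs, but the forbidden-subgraph check via \cref{thm:main} is shorter.

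For the final sentence, suppose a finite list of forbidden graphs characterized $\beautiful$. Each of its minimal members would then belong to the set of minimal obstructions, which is unique for any hereditary class. That set is infinite, because the odd holes have unbounded size. Concretely, the list would have to contain $C_{2k+1}$ for every $k\ge2$. The only delicate point is to argue through the uniqueness of minimal forbidden sets for hereditary classes rather than through a particular presentation. The antichain property supports this: distinct members of $\F$ are pairwise non-embeddable, since no member contains another as an induced subgraph. In particular, an odd hole contains no other odd hole, $C_4$ or triangle-based graph, and larger odd holes cannot embed in the small individual graphs.
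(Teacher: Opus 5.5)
\textbf{The watch step in your hand check fails.} Your route is the same as the paper's. You use \cref{thm:main} and \cref{prop:excluded} to reduce the corollary to the antichain property of $\F$. Vertex deletion turns a cycle into a path, and you inspect the three small graphs by hand. Your finiteness argument, via uniqueness of the set of minimal obstructions of a hereditary class, is also fine. But the claim that the watch has maximum degree~$3$ is false. Each diamond centre is adjacent to the other centre, to both tips and to its own pendant leaf, so it has degree~$4$. That is exactly the degree of the universal vertex of the gem. So your degree argument does not exclude an induced gem in the watch, and as written that part of the minimality check is unproved.

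The conclusion is still true, and the repair is easy. The gem and the watch both have $7$ edges, and the watch has no isolated vertex. So deleting any vertex of the watch removes at least one edge, and every $5$-vertex induced subgraph has at most $6$ edges; none is a gem. Alternatively, the gem has minimum degree~$2$, whereas deleting any vertex of the watch leaves at least one of its two leaves with degree at most~$1$. There is also a smaller slip in the net case. Your aside that its $5$-vertex induced subgraphs have at most one vertex of degree $\ge 3$ is false: deleting a leaf leaves two triangle vertices of degree~$3$. This is harmless, because the bound $\Delta=3<4$ for the net, which you also state, already rules out a gem.
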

\begin{proof}
The members of $\F$ form an induced-subgraph antichain. The three graphs
gem, net and watch have no induced cycle of length at least four and do
not contain one another as induced subgraphs; the latter statement is
checked directly from their definitions. A proper induced subgraph of a
cycle is a disjoint union of paths, which contains none of $\F$.
Consequently every proper induced subgraph of a member of $\F$ is
$\F$-free, hence beautiful by \cref{thm:main}. Conversely, every
non-beautiful graph contains a member of $\F$. In particular, all odd
holes are distinct minimal obstructions.
\end{proof}

For example, within the class of diamond-free graphs the gem and watch
are already absent, since each contains a diamond. \Cref{thm:main} therefore
specializes to
\[
 G\text{ diamond-free}\quad\Longrightarrow\quad
 \bigl(G\text{ beautiful}\ \Longleftrightarrow\
 G\text{ is }(C_4,\net,\text{odd holes})\text{-free}\bigr).
\]

\subsection{Recognition and positive certificates}\label{sec:algorithm}
\begin{corollary}\label{cor:recognition}
Beautiful graphs can be recognized in $O(n^4)$ time and $O(n^2)$ space,
using a Boolean-matrix implementation. On acceptance, the algorithm
outputs an $n\times n$ hereditary order-matrix certificate.
\end{corollary}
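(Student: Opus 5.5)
The algorithm follows the constructive proof of \cref{thm:main}: it builds the orientation of \cref{sec:orientation} directly and then checks its output. It does not search for the forbidden graphs. It is correct whether or not each intermediate step succeeds. If the input is beautiful, the construction provably yields a valid certificate. If it is not, some verification test fails, and we reject. Acceptance requires every test to pass. Rejection is justified because, by \cref{thm:main}, a beautiful graph always passes.

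\textbf{Steps.}
\begin{enumerate}[label=\textnormal{(\arabic*)}]
\item Store $G$ as an $n\times n$ adjacency matrix $A$ with ones on the diagonal, so that the rows of $A$ are the closed neighbourhoods. For all ordered pairs $(u,v)$, compute the containment indicator $N[u]\subseteq N[v]$. This is a Boolean product of $A$ with its complement: $N[u]\not\subseteq N[v]$ iff some $w$ has $A(u,w)=1$ and $A(v,w)=0$. Naive evaluation costs $O(n^3)$ time and $O(n^2)$ space.
\item Read off the inclusion-maximal closed neighbourhoods and choose one representative per distinct class to obtain $R$.
\item Two-colour $H=G[R]$ by breadth-first search. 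If $H$ is not bipartite, reject.
\item Compute each $D(v)$ from the containment table. If some $|D(v)|>2$, reject. This forms the regions $V_r$ and $W_{rs}$ of \eqref{eq:single-region}--\eqref{eq:double-region}.
\item Orient every edge by rules (1)--(3). Tie-breaking inside true-twin classes uses vertex indices. Inconsistencies are caught in the next step anyway.
\end{enumerate}

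\textbf{Verification.} This is the step that carries correctness, and it is where the $O(n^4)$ bound enters. Check all of the following.
\begin{enumerate}[label=\textnormal{(\alph*)}]
\item The relation $\le$ formed by the orientation plus the diagonal is a partial order: reflexive, antisymmetric and transitive. Checking transitivity over all triples costs $O(n^3)$.
\item Its comparability graph equals $G$. This costs $O(n^2)$.
\item Every interval is a chain: for each comparable pair $a\le b$ and every two $x,y$ in $[a,b]$, $x$ and $y$ are comparable. Over all $(a,b,x,y)$ this costs $O(n^4)$. It can be organized as, for each $a$, taking the up-set of $a$ intersected with down-sets, all within $O(n^2)$ working space.
\item $G$ is $C_4$-free. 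Equivalently, for each edge $uv$, the sets $N(u)\setminus N[v]$ and $N(v)\setminus N[u]$ have no edge between them. Alternatively, check every nonadjacent pair $x,y$ directly: their common neighbourhood $N(x)\cap N(y)$ must be a clique. Either version is $O(n^4)$ with $O(n^2)$ space.
\end{enumerate}

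If all tests pass, (iii) of \cref{thm:main} holds. By \cref{prop:hereditary}, $G$ is beautiful and the order matrix $M_P$ satisfies \eqref{eq:hereditary-matrix}, so we output $M_P$ as the certificate.

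\textbf{Completeness.} If $G$ is beautiful, it is $\F$-free. Then \cref{prop:core} gives the bipartite $H$, and \eqref{eq:dom-size} gives $|D(v)|\le 2$. Moreover \cref{lem:transitive,lem:chain-intervals} show that the constructed relation passes (a)--(c), and (d) holds by (ii). Hence no rejection occurs.

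\textbf{Main obstacle.} The main difficulty is staying within the stated bounds while ensuring that the verification by itself suffices. The construction steps need not be trusted on non-beautiful inputs, since they either abort or feed into the checks. The careful point is the interval check (c). Enumerating quadruples costs $O(n^4)$ time, which is admissible, and storing only $\le$ keeps the space at $O(n^2)$.
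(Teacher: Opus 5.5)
Your proposal is correct and follows essentially the same route as the paper. Both build the orientation of \cref{sec:orientation} and then independently verify that the relation is a partial order whose comparability graph is $G$, that it has chain intervals, and that $G$ is $C_4$-free; soundness comes from \cref{prop:hereditary} and completeness from \cref{thm:main}. The remaining differences are cosmetic: you place the $C_4$ test in the final verification, check intervals by quadruples rather than by common-predecessor/successor intersections, and omit the paper's extra support checks, which are vacuous anyway since every $D(v)$ is a nonempty clique.
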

\begin{proof}
The following procedure makes both completeness and soundness explicit.
\begin{enumerate}[label=\arabic*.]
\item Check induced-$C_4$-freeness. If a $C_4$ is found, reject.
\item Compute all closed-neighbourhood containments, select $R$, and
bipartition $H=G[R]$. Reject if $H$ is not bipartite.
\item Compute $D(v)$ and the regions. Reject if a support is empty, has more
than two elements, or is a two-element nonedge of $H$.
\item Form the orientation of \cref{sec:orientation}, using fixed vertex
indices to break ties. Reject if a required containment or a permitted
cross-region type is missing.
\item Independently verify that the reflexive relation is a partial order,
that its comparability graph is exactly $G$, and that no nonadjacent pair
has both a common predecessor and a common successor. Accept only if all
checks pass, and output the order matrix.
\end{enumerate}
Every beautiful input satisfies the structural lemmas, so the construction
and the final verification succeed by \cref{thm:main}. Conversely, an
accepted relation is a chain-interval order with $C_4$-free comparability
graph $G$, so \cref{prop:hereditary} guarantees beautifulness. Soundness
therefore does not rely on taking unverified intermediate structure on trust.

A direct test over four-vertex subsets uses $O(n^4)$ time. All pairwise
neighbourhood containments take $O(n^3)$ time and $O(n^2)$ space. The
remaining construction and order checks fit within these bounds; common
predecessors and successors can be tested by Boolean row and column
intersections. The total space remains $O(n^2)$.
\end{proof}
In particular, this recognizer does not enumerate odd holes and does not
enumerate maximal rectangles. It gives a positive certificate on acceptance;
we do not claim that every rejection outputs a forbidden induced subgraph.

The orientation also supplies a direct optimal colouring. Colour a vertex
by the maximum number of elements in a chain ending at it. Comparable
vertices receive different colours, and the number of colours is the
height of the order, equal to the clique number of its comparability graph.
The same argument applies to every induced subgraph. Thus beautiful graphs
are perfect by a direct order-theoretic argument.

\subsection{The corrected \texorpdfstring{$K_4$}{K4}-free statement}\label{sec:k4}
\begin{corollary}\label{cor:k4}
For a $K_4$-free graph $G$,
\[
 G\text{ is beautiful}\quad\Longleftrightarrow\quad
 G\text{ is a }C_4\text{-free cs-graph}.
\]
\end{corollary}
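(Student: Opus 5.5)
Both directions are short once the earlier results are in place. For the forward direction, assume $G$ is beautiful. Taking $W=V(G)$ in the definition shows that $G$ itself lies in $\csg$. \Cref{thm:main} (i)$\Rightarrow$(ii) then shows that $G$ has no induced member of $\F$, and in particular no induced $C_4$. So $G$ is a $C_4$-free cs-graph. The hypothesis of $K_4$-freeness is not used here.

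For the converse, assume $G$ is a $K_4$-free cs-graph with no induced $C_4$. By \cref{lem:k4-order}, $G=\Comp(P)$ for an IIC poset $P$ with chain intervals. Together with $C_4$-freeness this is exactly condition (iii) of \cref{thm:main}, so \cref{prop:hereditary} (or \cref{thm:main}(iii)$\Rightarrow$(i)) gives that $G$ is beautiful. The order-matrix certificate \eqref{eq:hereditary-matrix} is then inherited automatically.

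The only point needing care is confirming that \cref{lem:k4-order} really yields chain intervals for the \emph{whole} poset, not just $\DB(P)=\Comp(P)$. I would check the last sentence of its proof. If an interval $[a,b]$ contained incomparable $x,y$, then $x\wedge y$ and $x\vee y$ exist by IIC. Together with $x,y$ these give four distinct elements sharing the lower bound $x\wedge y$ and the upper bound $x\vee y$, hence a $K_4$ in $\DB(P)=G$.

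To justify the word ``corrected'', I would add a remark that neither hypothesis can be dropped. Without $C_4$-freeness, $C_4$ is $K_4$-free but not a cs-graph, so that case is harmless. Without $K_4$-freeness, the $C_4$-free cs-graphs need not be beautiful. For instance, the gem plus suitable extra vertices may be a cs-graph while containing a gem; I would look for such an example via the normal form of \cref{prop:normal} by building an IIC poset whose double-bound graph has an induced gem. This side example is the only step I expect to need genuine search. The equivalence itself is immediate from the cited results.
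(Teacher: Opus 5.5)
Your proof is correct and follows the paper's own argument: the forward direction is \cref{prop:excluded} (which you reach through \cref{thm:main}\,(i)$\Rightarrow$(ii)), and the converse is \cref{lem:k4-order} followed by \cref{prop:hereditary}. One correction to your side remark: dropping $C_4$-freeness is not harmless, because $\csg$ is not hereditary and a cs-graph can contain an induced $C_4$ even though $C_4$ itself is not a cs-graph. Indeed \cref{ex:wheel} gives the $K_4$-free cs-graph $K_1\vee C_4$, which is not beautiful, and this is exactly the conjecture that the corollary corrects.
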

\begin{proof}
The forward implication follows from the definition and
\cref{prop:excluded}. Conversely, a $K_4$-free cs-graph has a
chain-interval comparability order by \cref{lem:k4-order}. If it is also
$C_4$-free, \cref{prop:hereditary} applies.
\end{proof}

The added $C_4$-freeness is essential. The stronger equality between
$K_4$-free cs-graphs and $K_4$-free beautiful graphs was conjectured in
\cite[Section~3]{Wunderlich2008}; the following matrix supplies a counterexample.

\begin{example}\label{ex:wheel}
Let
\[
 M=\begin{pmatrix}
 1&0&1&1\\
 0&1&1&1\\
 0&0&1&0\\
 0&0&0&1
 \end{pmatrix}.
\]
Its maximal nonempty 1-rectangles are exactly
\begin{align*}
 R_1&=\{1\}\times\{1,3,4\}, &
 R_2&=\{2\}\times\{2,3,4\},\\
 R_3&=\{1,2\}\times\{3,4\}, &
 R_4&=\{1,2,3\}\times\{3\},\\
 R_5&=\{1,2,4\}\times\{4\}. &&
\end{align*}
A rectangle containing row~3 extends maximally to $R_4$; one containing
row~4 extends to $R_5$. Without either row, the three possible nonempty
row sets give $R_1,R_2,R_3$. This proves completeness of the list.
The rectangles $R_1,R_4,R_2,R_5$ form an induced $C_4$ in that order,
while $R_3$ intersects all four. Hence
\[
 \cs(M)\cong K_1\vee C_4,
\]
where $\vee$ denotes the join. This graph has clique number three but is
not beautiful.
\end{example}
Consequently $\csg$ is not hereditary and cannot itself have a
forbidden-induced-subgraph characterization. This is why
\cref{prop:normal} is an order-theoretic statement, whereas
\cref{thm:main} is a forbidden-family statement.

\Needspace{16\baselineskip}
\subsection{An example with a diamond}\label{sec:example}
On vertices $u,v,a,b,x,y$, take the edge set
\[
 \{uv,ua,ub,va,vb,ux,ay\}.
\]
The maximal-neighbourhood representatives are $u,a$. Their regions are
$V_u=\{u,v,b,x\}$, $V_a=\{a,y\}$, and $W_{ua}=\varnothing$; the
interface is $K_{ua}=\{u,v,a\}$. Put $u$ on the lower side and $a$ on the
upper side. The construction gives the order generated by
\[
 u<v<b,\qquad v<a,\qquad u<x,\qquad y<a.
\]
In the vertex order $(u,v,a,b,x,y)$, its matrix is
\[
 M_P=\begin{pmatrix}
 1&1&1&1&1&0\\
 0&1&1&1&0&0\\
 0&0&1&0&0&0\\
 0&0&0&1&0&0\\
 0&0&0&0&1&0\\
 0&0&1&0&0&1
 \end{pmatrix}.
\]
The graph-theoretic diamond has centres $u,v$, but its two tips lie above
the chain $u<v$. They do not occur between common lower and upper bounds.
Thus a graph diamond need not be a non-chain order interval. Every
principal submatrix of this $M_P$ represents the corresponding induced
subgraph.

\section{Concluding remarks}\label{sec:conclusion}
The main result completes the forbidden-family description of the
beautiful Berge class introduced in \cite{Wunderlich2009}. Its necessity
was known; the maximal-neighbourhood decomposition proves sufficiency
without a clique-size restriction. The representation tools connect this
classification to established theories: for general cs-graphs, an IIC
poset and its classical double-bound graph record exactly when maximal
rectangles share both a row and a column. For beautiful graphs, the forbidden families force a
stronger order whose intervals are chains; in the $C_4$-free setting this
order persists as a valid matrix certificate under every vertex restriction.
The maximal-neighbourhood graph supplies the bipartition needed to orient
all regions consistently, including those containing diamonds and large
cliques.

The construction leaves algorithmic refinements to investigate, such as a
faster recognizer with an explicit forbidden-subgraph certificate on every
negative instance. It also suggests studying the minimum matrix dimensions
needed to represent a beautiful graph: the uniform $n\times n$ bound need
not be optimal. Neither question is needed for the characterization proved
here.

\section*{Disclosure of AI assistance}
OpenAI ChatGPT was used as an assistance tool for drafting, checks, and locating related literature.  
The author reviewed, revised, and takes responsibility for the mathematical statements, arguments, citations, and final contents of the manuscript.

\renewcommand{\bibfont}{\small}
\setlength{\bibsep}{4pt}
\bibliographystyle{plainnat}
\bibliography{references}
\end{document}